\def\defcolor{black!10}
\newcommand{\distas}[1]{\mathbin{\overset{#1}{\kern\z@\sim}}}%
\newsavebox{\mybox}\newsavebox{\mysim}
\newcommand{\distras}[1]{%
  \savebox{\mybox}{\hbox{\kern3pt$\scriptstyle#1$\kern3pt}}%
  \savebox{\mysim}{\hbox{$\sim$}}%
  \mathbin{\overset{#1}{\kern\z@\resizebox{\wd\mybox}{\ht\mysim}{$\sim$}}}%
}
\setlist[enumerate,1]{label=(\roman*)}
\numberwithin{equation}{section}
\newcommand{\alsocheck}[1]{%
  \expandafter\let\csname @@\string#1\endcsname#1%
  \expandafter\DeclareRobustCommand\csname relax\string#1\endcsname[1]{%
    \csname @@\string#1\endcsname{##1}\@for\@temp:=##1\do{\wrtusdrf{\@temp}\wrtusdrf{{\@temp}}}}%
  \expandafter\let\expandafter#1\csname relax\string#1\endcsname
}
\newcommand{\alsocheckrange}[1]{%
  \expandafter\let\csname @@\string#1\endcsname#1%
  \expandafter\DeclareRobustCommand\csname relax\string#1\endcsname[2]{%
    \csname @@\string#1\endcsname{##1}{##2}\wrtusdrf{##1}\wrtusdrf{{##1}}\wrtusdrf{##2}\wrtusdrf{{##2}}}%
  \expandafter\let\expandafter#1\csname relax\string#1\endcsname
}
\declaretheoremstyle[
  shaded={bgcolor=\thmcolor,
  }
]{plain}
\declaretheoremstyle[
  headfont=\normalfont\bfseries,
  bodyfont=\normalfont
]{noital}
\declaretheoremstyle[
  headfont=\normalfont\bfseries,
  bodyfont=\normalfont,
]{noital}
\declaretheorem[style=plain,numberwithin=section,name=Theorem]{theorem}
\declaretheorem[style=plain,sibling=theorem,name=Proposition]{proposition}
\declaretheorem[style=plain,sibling=theorem,name=Lemma]{lemma}
\declaretheorem[style=plain,sibling=theorem,name=Corollary]{corollary}
\declaretheorem[style=plain,numbered=no,name=Theorem]{theoremn-n}
\declaretheorem[style=plain,numbered=no,name=Theorems]{theorems-n}
\declaretheorem[style=plain,numbered=no,name=Proposition]{proposition-n}
\declaretheorem[style=plain,numbered=no,name=Propositions]{propositions-n}
\declaretheorem[style=plain,numbered=no,name=Lemma]{lemma-n}
\declaretheorem[style=plain,numbered=no,name=Lemmas]{lemmas-n}
\declaretheorem[style=plain,numbered=no,name=Corollary]{corollary-n}
\declaretheorem[style=plain,numbered=no,name=Corollaries]{corollaries-n}
\declaretheorem[style=plain,numbered=no,name=Conjecture]{conjecture-n}
\declaretheorem[style=plain,numbered=no,name=Conjectures]{conjectures-n}
\declaretheorem[style=plain,numbered=no,name=Question]{question-n}
\declaretheorem[style=plain,numbered=no,name=Questions]{questions-n}
\declaretheorem[style=plain,numbered=no,name=Claim]{claim-n}
\declaretheorem[style=plain,numbered=no,name=Claims]{claims-n}
\declaretheorem[style=plain,numbered=no,name=Fact]{fact-n}
\declaretheorem[style=plain,numbered=no,name=Facts]{facts-n}
\declaretheorem[style=plain,numbered=no,name=Problem]{problem-n}
\declaretheorem[style=plain,numbered=no,name=Problems]{problems-n}
\declaretheorem[style=plain,numbered=no,name=Open Problem]{openproblem-n}
\declaretheorem[style=plain,numbered=no,name=Open Problems]{openproblems-n}
\declaretheorem[style=plain,numbered=no,name=Challenge]{challenge-n}
\declaretheorem[style=plain,numbered=no,name=Challenges]{challenges-n}
\declaretheorem[style=plain,numbered=no,name=Exercise]{exercise-n}
\declaretheorem[style=plain,numbered=no,name=Exercises]{exercises-n}
\declaretheorem[style=plain,numbered=no,name=Property]{property-n}
\declaretheorem[style=plain,numbered=no,name=Properties]{properties-n}
\declaretheorem[style=noital,sibling=theorem,name=Remark]{remark}
\declaretheorem[style=noital,sibling=theorem,name=Definition]{definition}
\declaretheorem[style=noital,numbered=no,name=Remark]{remark-n}
\declaretheorem[style=noital,numbered=no,name=Remarks]{remarks-n}
\declaretheorem[style=noital,numbered=no,name=Definition]{definition-n}
\declaretheorem[style=noital,numbered=no,name=Definitions]{definitions-n}
\declaretheorem[style=noital,numbered=no,name=Construction]{construction-n}
\declaretheorem[style=noital,numbered=no,name=Constructions]{constructions-n}
\declaretheorem[style=noital,numbered=no,name=Observation]{observation-n}
\declaretheorem[style=noital,numbered=no,name=Observations]{observations-n}
\declaretheorem[style=noital,numbered=no,name=Example]{example-n}
\declaretheorem[style=noital,numbered=no,name=Examples]{examples-n}
\renewcommand{\le}{\leqslant}
\renewcommand{\leq}{\leqslant}
\renewcommand{\ge}{\geqslant}
\let\oldexists\exists
\let\exists\relax
\DeclareMathOperator{\exists}{\:\!\oldexists}
\let\oldforall\forall
\let\forall\relax
\DeclareMathOperator{\forall}{\:\!\oldforall}
\DeclarePairedDelimiter{\set}{\lbrace}{\rbrace}
\newcommand{\emptyset}{\varnothing}
\DeclareMathOperator{\ind}{\mathbf{1}}
\renewcommand{\d}{\mathop{}\!\mathrm{d}}
\newcommand{\du}{\d u}
\newcommand{\mod}[1]{\ (\mathrm{mod}\ #1)}
\DeclarePairedDelimiterX{\abs}[1]
  {\lvert}{\rvert}{\ifblank{#1}{\,\cdot\,}{#1}}
\DeclarePairedDelimiterX{\norm}[1]
  {\lVert}{\rVert}{\ifblank{#1}{\,\cdot\,}{#1}}
\DeclarePairedDelimiterX{\inner}[2]
  {\langle}{\rangle}{\ifblank{#1}{\,\cdot\,}{#1},\ifblank{#2}{\,\cdot\,}{#2}}
\DeclareMathDelimiter{\given}
  {\mathbin}{symbols}{"6A}{largesymbols}{"0C}
\DeclareMathOperator{\Prob}{\mathbb{P}}
\DeclarePairedDelimiterXPP{\prob}[1]
  {\Prob}{\lparen}{\rparen}{}
  {\renewcommand{\given}{\nonscript\;\delimsize\vert\nonscript\;\mathopen{}}#1}
\DeclareMathOperator{\Expec}{\mathbb{E}}
\DeclarePairedDelimiterXPP{\expec}[1]
  {\Expec}{\lparen}{\rparen}{}
  {\renewcommand{\given}{\nonscript\;\delimsize\vert\nonscript\;\mathopen{}}#1}
\DeclareMathOperator{\Var}{Var}
\DeclarePairedDelimiterXPP{\var}[1]
  {\Var}{\lparen}{\rparen}{}
  {\renewcommand{\given}{\nonscript\;\delimsize\vert\nonscript\;\mathopen{}}#1}
\DeclareMathOperator{\Cov}{Cov}
\DeclarePairedDelimiterXPP{\cov}[2]
  {\Cov}{\lparen}{\rparen}{}{#1,#2}
\let\SS\relax
\newcommand{\PP}{\mathbb{P}}
\newcommand{\RR}{\mathbb{R}}
\newcommand{\SS}{\mathbb{S}}
\DeclareMathOperator{\Li}{Li}
\DeclareMathOperator{\aff}{aff}
\begin{document}

\title[random chords on a circle]{On extremal problems associated with random chords on a circle}

\author[C. Bortolotto]{Cynthia Bortolotto}
\address{Department of Mathematics, ETH Zürich, Zürich, Switzerland}
\email{cynthia.bortolotto@math.ethz.ch}

\author[J. P. G. Ramos]{Jo\~{a}o P. G. Ramos}
\address{Institute of Mathematics, EPFL, Lausanne, Switzerland}
\email{joao.ramos@epfl.ch}


\begin{abstract}
Inspired by the work of Karamata, we consider an extremization problem associated with the probability of intersecting two random chords inside a circle of radius $r, \, r \in (0,1]$, where the endpoints of the chords are drawn according to a given probability distribution on $\mathbb{S}^1$.

We show that, for $r=1,$ the problem is degenerated in the sense that any \emph{continuous} measure is an extremiser, and that, for $r$ sufficiently close to $1,$ the desired maximal value is strictly below the one for $r=1$ by a polynomial factor in $1-r.$ Finally, we prove, by considering the auxiliary problem of drawing a single random chord, that the desired maximum is $1/4$ for $r \in (0,1/2).$ Connections with other variational problems and energy minimization problems are also presented. 
\end{abstract}

\maketitle


\section{Introduction}

\subsection{Historic background and main results} Let $C=\mathbb{S}^1 \subset \RR^2$ be the unit circle and let $\Pi_n$ be a regular $n$-sided polygon inscribed in $C$.  Consider all the diagonals of $\Pi_n$. A natural question in incidence geometry concerns the distribution of the intersection points of these diagonals.

In more effective terms, let $C_r$ be the circle of radius $r$, concentric with $C$ and let $I_n(r)$ be the number of intersection of these diagonals - counted with multiplicity -inside $C_r$. In 1961, Karamata \cite{Karamata} proved that
$$\lim_{n \rightarrow \infty} \frac{I_n(r)}{I_n(1)} = \frac2{\pi^2} \Li_2(r^2), $$
for any $0 \le r \le 1,$ where $\Li_2$ is the Euler's dilogarithm function defined by
$$ \Li_2(x) = \sum_{k=0}^\infty \frac{x^k}{k^2}, $$
for $x \in \mathbb{C}\setminus 0\le x \le 1$. Since the work of Karamata, that problem remained relatively dorment until recently, when the first author and V. Souza \cite{Bortolotto-Souza} extended the previous results to diagonals intersecting outside the circle, through both analytic and geometric methods, simplifying and extending Karamata's proof. 

As a matter of fact, one of the essential reductions of \cite{Bortolotto-Souza} uses that the problem can be reframed in a continuous setting: consider four independent random points $X_1$, $X_2$, $X_3$, $X_4$, chosen uniformly from the unit circle $\SS^1 \subseteq \RR^2$ centered at the origin $O \in \RR^2$.
Let $\aff(x,y)$ be the affine subspace spanned by $x,y \in \RR^2,$ whenever $x\neq y$ (observe that if $x=y$, the definition implies that $\aff(x,y)=x).$
With full probability $\aff(X_1,X_2)$ is one dimensional and intersect $\aff(X_3,X_4)$ at a single point. A main result of \cite{Bortolotto-Souza} is that 
\begin{equation*}
    \prob[\big]{||\aff(X_1,X_2) \cap \aff(X_3,X_4)||_2 < r} = \frac2{\pi^2} \Li_2(r^2),
\end{equation*}
for $0\leq r \leq 1$. 

In this manuscript, we shall be concerned with an optimisation problem inspired by the results in \cite{Karamata} and \cite{Bortolotto-Souza}. In particular, we are interested in knowing, for fixed $r \in (0,+\infty),$ which probability distribution in the unit circle maximises 
$$\mathbb{P}_{\mu} \left(||\aff(X_1,X_2) \cap \aff(X_3,X_4)||_2 < r\right), $$
where $X_i$ are independent and drawn according to $\mu$ on $\mathbb{S}^1.$ Here, we let, in case $x_1 = x_2,$ $\aff(x_1,x_2) = \aff(x_1,x_1)$ denote the tangent space to $\mathbb{S}^1$ at $x_1.$ 

In this regard, note first that, for $r> 1,$ the desired maximum trivially equals $1$, which follows by considering $\mu = \delta_1.$ Hence, in what follows we will concentrate on the case $r \in [0,1].$ We introduce first some notation.

Indeed, let us define a function $f: (\SS^1)^4 \rightarrow [0,\infty]$ by
\begin{align*}
    f(x_1,x_2,x_3,x_4)=\begin{cases}
||\aff(x_1,x_2) \cap \aff(x_3,x_4)||_2 & \text{ if } \aff(x_1,x_2)\cap\aff(x_3,x_4)=\{x_0\} \\
 \operatorname{d}( \aff(x_1,x_2), 0) & \text{ if } \aff(x_1,x_2) = \aff(x_3,x_4) \\
\infty & \text{ if } \aff(x_1,x_2)\cap\aff(x_3,x_4) = \emptyset,
    \end{cases}
\end{align*}
where $\operatorname{d}( \aff(x_1,x_2), 0) $ denotes the Euclidean distance of the line to the origin. We denote by $\ell = f(X_1,X_2,X_3,X_4)$ and if $X_i$, $i=1,2,3,4$, are $\mu$ distributed and we are interested in studying
\begin{align*}
    \PP_\mu(\ell < r  ) := \int_{(\mathbb{S}^1)^4} \ind_{ \{ f(x_1,x_2,x_3,x_4)<r \}} \d\mu(x_1) \d\mu(x_2)\d\mu(x_3)\d\mu(x_4).
\end{align*}

We consider its maximal version.

\begin{definition}
   For $0\leq r\leq 1$ and $\mu \in \mathcal{M}(\mathbb{S}^1)$, let
\begin{align}
    A(r)= \sup_{\mu \in \mathcal{M}(\mathbb{S}^1)} \PP_{\mu}(\ell <r)
\end{align}
and
\begin{align}
        \overline{A}(r)= \sup_{\mu \in \mathcal{M}(\mathbb{S}^1)} \PP_\mu(\ell \le r).
\end{align}
\end{definition}
We are hence interested in understanding the behaviour of the functions $A$ and $\overline{A}$. The first main result of this manuscript a characterization of what happens at the $r=1$ case. Contrarily to what happens in many problems in geometric-flavoured optimisation, the space of extremals is degenerate - indeed, the optimisers for that problem are \emph{exactly} the continuous measures on the circle. Effectively, we will show the following: 

\begin{theorem}\label{thm:boundary} We have that
\begin{align*}
    A(1) = \frac{1}{3}.
\end{align*}
Moreover, $\mu$ is a measure such that $\mathbb{P}_{\mu}(\ell < 1) = \frac13$ if, and only if, $\mu(\{p\}) = 0$ for each $p \in \mathbb{S}^1.$  
\end{theorem}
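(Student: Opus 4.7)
The plan is to reduce the computation of $\mathbb{P}_\mu(\ell < 1)$ to a combinatorial identity by conditioning on the \emph{multiset type} of the tuple $(X_1,X_2,X_3,X_4)$: the possible types (multiplicity partitions of $4$) are $(1,1,1,1)$, $(2,1,1)$, $(3,1)$, $(2,2)$, and $(4)$, with respective probabilities $P_{\mathrm{dist}}$, $P_{2,1,1}$, $P_{3,1}$, $P_{2,2}$, $P_{4}$ summing to $1$. Going through the orderings of each type one by one, and using that $\aff(x,x)$ is the tangent to $\mathbb{S}^1$ at $x$ (whose distance to the origin equals $1$) while two distinct lines through a common point $p\in\mathbb{S}^1$ meet at $p$, one verifies that the conditional probability $\mathbb{P}(\ell<1\mid \text{type})$ equals $\tfrac{1}{3}$ on $(1,1,1,1)$, equals $\tfrac{2}{3}$ on $(2,2)$, and equals $0$ on the remaining three types. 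The only mildly subtle count is on $(2,2)$: of the six orderings of $\{p,p,q,q\}$, the two with $\{X_1,X_2\}=\{p,p\},\{X_3,X_4\}=\{q,q\}$ (or vice versa) produce the tangents at $p$ and $q$, so $f\ge 1$, while the remaining four satisfy $\aff(X_1,X_2)=\aff(X_3,X_4)=\aff(p,q)$, whose distance to the origin is strictly less than $1$.

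Summing over types yields $\mathbb{P}_\mu(\ell<1)=\tfrac{1}{3}P_{\mathrm{dist}}+\tfrac{2}{3}P_{2,2}$, which already gives $\mathbb{P}_\mu(\ell<1)=\tfrac{1}{3}$ whenever $\mu$ is atomless. The inequality $\mathbb{P}_\mu(\ell<1)\le\tfrac{1}{3}$ is equivalent to $P_{2,2}\le P_{2,1,1}+P_{3,1}+P_{4}$. Writing $M_k:=\sum_p\mu(\{p\})^k$ (sum over atoms) and $A:=M_1$, a direct multinomial count gives $P_{2,2}=3(M_2^2-M_4)$ and $P_{3,1}=4(M_3-M_4)$, so it suffices to prove the stronger bound $P_{2,2}\le P_{3,1}$, equivalent to $3M_2^2+M_4\le 4M_3$. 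This follows immediately from the two facts $M_4\le M_3$ (since $\mu(\{p\})\le 1$) and $M_2^2\le A\,M_3\le M_3$, the latter being Cauchy--Schwarz $\bigl(\sum_p\mu(\{p\})^{1/2}\cdot\mu(\{p\})^{3/2}\bigr)^2\le\bigl(\sum_p\mu(\{p\})\bigr)\bigl(\sum_p\mu(\{p\})^3\bigr)$ combined with $A\le 1$.

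For the equality characterization, the atomless implication is already contained in the formula. Conversely, $\mathbb{P}_\mu(\ell<1)=\tfrac{1}{3}$ forces $P_{2,2}=P_{2,1,1}+P_{3,1}+P_{4}$; combined with $P_{2,2}\le P_{3,1}$ this yields $P_{2,1,1}=P_{4}=0$, and since $P_{4}=\sum_p\mu(\{p\})^4$, the vanishing of $P_{4}$ is precisely the condition $\mu(\{p\})=0$ for every $p\in\mathbb{S}^1$.

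The main technical obstacle is the careful case analysis in the first step, especially on the $(2,2)$ multisets, where one must distinguish tangent pairings from the pairings that produce a single common chord; one must also verify that every ordering of the $(2,1,1)$, $(3,1)$, and $(4)$ types truly forces $f\ge 1$, via the elementary observation that two distinct lines through a common boundary point meet only at that boundary point. Once this enumeration is executed cleanly, the remaining inequality work is a single application of Cauchy--Schwarz.
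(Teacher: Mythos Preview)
Your proof is correct and considerably more streamlined than the paper's. The key identity
\[
\mathbb{P}_\mu(\ell<1)=\tfrac{1}{3}P_{\mathrm{dist}}+\tfrac{2}{3}P_{2,2}
\]
follows rigorously from exchangeability (average over the $24$ permutations of the indices and evaluate the deterministic fraction of orderings with $f<1$ on each multiset type); your case analysis of the five types is accurate, including the observation that two distinct chords sharing a boundary point, or any chord paired with a tangent, force $f\ge 1$. The inequality $3M_2^2+M_4\le 4M_3$ via Cauchy--Schwarz ($M_2^2\le M_1M_3\le M_3$) and $M_4\le M_3$ is clean and sharp enough to pin down equality through $P_4=0$.

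By contrast, the paper first establishes, for finitely supported $\mu$, the explicit Newton--Girard formula
\[
\mathbb{P}_\mu(\ell<1)=\tfrac{1}{3}-2M_2+\tfrac{8}{3}M_3+3M_2^2-4M_4,
\]
and then proves the strict inequality by a Lagrange-multiplier analysis of the quartic $2M_2-\tfrac{8}{3}M_3-3M_2^2+4M_4$ on the simplex (splitting into several numerical sub-cases). The atomless case is handled separately by a partition/limit argument, and the general case by discretising the continuous part and invoking Portmanteau. Your multiset-type decomposition bypasses all of this: it treats every $\mu$ at once, replaces the Lagrange-multiplier computation by a one-line Cauchy--Schwarz, and makes the equality case immediate. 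One thing the paper's route does yield that yours (as written) does not is the quantitative lower bound $\tfrac{1}{3}-\mathbb{P}_\mu(\ell<1)\ge C\,M_2$, which is used later for Theorem~\ref{thm:almot-boundary}; your inequality $P_{2,2}\le P_{3,1}$ gives instead $\tfrac{1}{3}-\mathbb{P}_\mu(\ell<1)\ge \tfrac{1}{3}(P_{2,1,1}+P_4)$, which suffices for Theorem~\ref{thm:boundary} but would need a small additional argument to recover the $M_2$ bound.
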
 

In order to prove Theorem \ref{thm:boundary}, we will first reduce to the case of \emph{discrete} measures. In that case, the strict inequality 
\[
\mathbb{P}_{\mu}(\ell < 1) < \frac13
\]
holds. We then use a discretization procedure to show that the non-strict inequality holds for all measures. Finally, we prove that, if a measure gives non-trivial weight to any single point in $\mathbb{S}^1,$ then it is not an extremal. Since all continuous measures are easily seen to be extremal, this concludes the proof of that result. 

As a consequence of the proof of Theorem \ref{thm:boundary}, we are able to show a weak stability version of Theorem \ref{thm:boundary}: any measure $\mu$ close to satisfying $\mathbb{P}_{\mu}(\ell < 1) = \frac13$ must, indeed, give small masses to each individual point.

Our next result, which is directly related to Theorem \ref{thm:boundary}, is a description of what happens near $t=1:$ 

\begin{theorem}\label{thm:almot-boundary} Let $\mu$ be a measure on $\mathbb{S}^1.$ Then, if $\theta > 0$ is sufficiently small, we have 
\[
\mathbb{P}_{\mu}(\ell < 1-\theta) \le \frac13 - C \theta^2.
\]
\end{theorem}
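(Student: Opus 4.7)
The plan is to decompose the deficit
\[
\tfrac{1}{3} - \mathbb{P}_\mu(\ell < 1-\theta) = \eta(\mu) + \beta(\mu, \theta),
\]
where $\eta(\mu) := \tfrac{1}{3} - \mathbb{P}_\mu(\ell < 1) \ge 0$ is the ``atomic deficit'' and $\beta(\mu, \theta) := \mathbb{P}_\mu(\ell \in [1-\theta, 1))$ is the ``annulus mass''. It suffices to show $\eta(\mu) + \beta(\mu, \theta) \ge C\theta^2$ for a universal $C > 0$ and all small $\theta$; I would do this via a case split on the size of the largest atom $a_{\max} := \max_p \mu(\{p\})$.

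First I would prove a quantitative refinement of Theorem~\ref{thm:boundary}: there is a universal $c_\eta > 0$ with $\eta(\mu) \ge c_\eta a_{\max}^2$. The key identity, obtained by analysing the coincidence patterns among $X_1, \ldots, X_4$, is
\[
\mathbb{P}_\mu(\ell < 1) = \tfrac{1}{3}\,\mathbb{P}(X_1, \ldots, X_4 \text{ distinct}) + \mathbb{P}\bigl(\{X_1, X_2\} = \{X_3, X_4\},\, X_1 \neq X_2\bigr),
\]
the first term from the generic crossing scenario (by exchangeability on the distinctness event and the fact that exactly one of the three pairings of four distinct points on $\mathbb{S}^1$ crosses), the second capturing the degenerate case in which the two chord lines coincide, so that $\ell = d(\aff(X_1, X_2), 0) < 1$. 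With $p_k := \sum_p \mu(\{p\})^k$, the two contributions are at most $\tfrac{1}{3}(1 - p_2)$ and $2p_2^2$ respectively, yielding $\eta(\mu) \ge \tfrac{p_2}{3} - 2p_2^2 \ge \tfrac{p_2}{6} \ge \tfrac{a_{\max}^2}{6}$ whenever $p_2 \le 1/12$; the remaining compact parameter region $p_2 \in [1/12, 1]$ is handled by the explicit polynomial expression $\eta = 2p_2 - 3p_2^2 - \tfrac{8}{3}p_3 + 4p_4$, which is bounded below there by a positive universal constant.

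Fix a universal $C_0 > 0$. If $a_{\max} \ge C_0\theta$, the stability bound gives $\eta(\mu) \ge c_\eta C_0^2 \theta^2$ and we are done. Otherwise $\mu(\{p\}) < C_0\theta$ for every $p$, and I lower bound $\beta(\mu, \theta)$ by a pigeonhole-plus-geometry argument. Partition $\mathbb{S}^1$ into arcs of length at most $\alpha := \sqrt{8\theta}$; by pigeonhole some arc $I$ has $\mu(I) \ge c\sqrt\theta$ for a universal $c > 0$. The geometric observation is that if four points all lie in $I$ and the lines $\aff(X_1, X_2), \aff(X_3, X_4)$ meet \emph{strictly inside} the open disk, the intersection lies on both chord segments (since the portion of a chord line inside the disk coincides with the segment), hence in the convex hull of the four points, hence in the circular cap of depth $1 - \cos(\alpha/2) \le \alpha^2/8 = \theta$ bounded by the straight chord subtending $I$. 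This forces $\ell \ge 1 - \theta$, so combined with $\ell < 1$ one obtains $\ell \in [1 - \theta, 1)$ on this event.

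For the conditional measure $\nu := \mu|_I / \mu(I)$, the atoms have mass at most $C_0\theta / \mu(I) \le (C_0/c)\sqrt\theta$, so the identity of the first step and the bound $1 - \mathbb{P}_\nu(\text{distinct}) \le 6 p_2(\nu) \le 6 a_{\max}(\nu)$ give $\mathbb{P}_\nu(\ell < 1) \ge \tfrac{1}{3}\mathbb{P}_\nu(\text{distinct}) \ge \tfrac{1}{3} - O(\sqrt\theta)$. Combined with $\mathbb{P}(X_1, \ldots, X_4 \in I) = \mu(I)^4 \ge c^4\theta^2$, this yields
\[
\beta(\mu, \theta) \ge \mu(I)^4 \cdot \mathbb{P}_\nu(\ell < 1) \ge \tfrac{c^4}{3}\theta^2\bigl(1 - O(\sqrt\theta)\bigr) \ge \tfrac{c^4}{6}\theta^2
\]
for $\theta$ below a universal threshold. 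Setting $C = \min\{c_\eta C_0^2,\, c^4/6\}$ closes both cases. The main technical points I anticipate are the geometric cap bound (elementary once the chord-segment identification is invoked) and the bookkeeping of the degenerate coincidence term in the quantitative stability estimate; the pigeonhole and case split are then routine.
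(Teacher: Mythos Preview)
Your proposal is correct and follows essentially the same strategy as the paper: split on the size of the largest atom, invoke the quantitative stability at $r=1$ when an atom is heavy, and otherwise run a pigeonhole-plus-cap-geometry argument on a short arc. Your two-way split on $a_{\max}$ versus $C_0\theta$ is in fact a little cleaner than the paper's three-way split (first on the mass of the continuous part, then on $a_{\max}$ versus $\theta^{1/2}/C$); the paper's separate ``large continuous part'' case is absorbed into your light-atom case, since normalising $\mu|_I$ automatically yields a measure with atoms of size $O(\sqrt\theta)$ regardless of the continuous component. The one step you leave as an assertion --- that $\eta = 2p_2 - 3p_2^2 - \tfrac{8}{3}p_3 + 4p_4$ is bounded below by a positive universal constant once $p_2 \ge 1/12$ --- is exactly the content of the paper's Lemma~\ref{lemma:improved-lower}, proved there via a Lagrange-multiplier case analysis; your elegant bound $\eta \ge p_2/3 - 2p_2^2$ from the coincidence identity does not cover that range on its own, so you are implicitly relying on that lemma.
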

This result can be understood as a sort of ``uncertainty principle'' for the distribution of intersections of chords. That is, we cannot perfectly concentrate the intersections of chords inside any circle slightly smaller than the full circle; some amount of the intersections must lie on the remaining annulus. 

The proof of Theorem \ref{thm:almot-boundary} is through carefully analysing boundary behaviour: indeed, the idea is to look at the continuous part of the measure $\mu.$ If it has large mass, then it is simple to show that its intersections have to be `spread out'. On the other hand, if the continuous part is small, we either have that a single point mass is not too small - in which case we violate the corollary of Theorem \ref{thm:boundary} mentioned above -, or all point masses are small. If the latter is the case, then the measure roughly looks like a continuous one, and thus a suitable version of the argument employed for the proof of the continuous part can be used, finishing the argument. 

As a by-product of Theorem \ref{thm:almot-boundary}, we have the following result: 

\begin{corollary} Let $\theta > 0$ be sufficiently small. Then there are absolute constants $c_1,c_2>0$ such that we have 
\[
\frac13 - c_1 \frac{\theta}{\log (1/\theta)} \le A(1-\theta) \le \frac13 - c_2 \theta^2.
\]
\end{corollary}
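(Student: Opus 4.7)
The corollary bundles two separate estimates that I would handle independently.

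The upper bound is immediate. Theorem~\ref{thm:almot-boundary} applies pointwise: for every $\mu \in \mathcal{M}(\mathbb{S}^1)$ and every sufficiently small $\theta>0$, $\mathbb{P}_\mu(\ell < 1-\theta) \leq \tfrac{1}{3} - C\theta^2$ for the absolute constant $C>0$ supplied by that theorem. Taking the supremum over $\mu$ yields $A(1-\theta) \leq \tfrac{1}{3} - c_2 \theta^2$ with $c_2 := C$, with no further work.

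For the lower bound, I must exhibit an explicit (possibly $\theta$-dependent) measure realizing $\mathbb{P}_{\mu}(\ell < 1-\theta) \ge \tfrac{1}{3} - c_1 \theta/\log(1/\theta)$. The uniform measure $\mu_0$ alone does not suffice: combining Karamata's identity $\mathbb{P}_{\mu_0}(\ell < r) = \frac{2}{\pi^2}\Li_2(r^2)$ with the standard dilogarithm asymptotic $\frac{\pi^2}{6} - \Li_2(1-\eps) = \eps \log(1/\eps) + O(\eps)$ as $\eps \to 0^+$ gives
\[
\mathbb{P}_{\mu_0}(\ell < 1-\theta) = \frac{1}{3} - \frac{4}{\pi^2}\,\theta \log(1/\theta) + O(\theta),
\]
whose deficit exceeds $c_1\theta/\log(1/\theta)$ for small $\theta$. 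Symmetrically, a purely discrete measure $\mu_N$ uniform on $N$ equally spaced points with $N \sim 1/\sqrt{\theta}$ is also insufficient: one can check that every non-degenerate chord-chord crossing for $\mu_N$ lies at distance at most $1 - c/N^2 \le 1-\theta$ from the origin, but the non-alternating and degenerate 4-tuples cost an extra factor of $O(1/N) = O(\sqrt{\theta})$ in the deficit, again well above the target rate.

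To achieve the target, I would interpolate between the two extremes by a two-scale mixture of the form $\mu_\theta = (1-\lambda)\mu_0 + \lambda \nu_N$, where $\nu_N$ is a (possibly slightly smeared, to maintain continuity) measure concentrated near $N$ equally spaced points, and $\lambda = \lambda(\theta)$, $N = N(\theta)$ are tuned so that the gain from suppressing near-tangent chords just outweighs the perturbative loss from mixing. A back-of-the-envelope balance suggests $N \sim 1/\sqrt{\theta}$ together with $\lambda \sim 1/\log(1/\theta)$, producing a total deficit of order $\theta/\log(1/\theta)$.

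The hard part will be making this balance quantitative. Concretely, I would decompose $\mathbb{P}_{\mu_\theta}(\ell < 1-\theta)$ into the $2^4=16$ contributions indexed by which of $X_1,\dots,X_4$ are drawn from $\mu_0$ versus $\nu_N$, analyze each term via a refinement of Karamata's calculation adapted to non-uniform measures (tracking the logarithmic boundary divergence of the intersection-point density), and then optimize in $\lambda$ and $N$ to obtain deficit $\leq c_1 \theta/\log(1/\theta)$. It is conceivable that a different but related explicit construction — for instance, a continuous measure with density tailored to cancel the leading $\log(1/\theta)$ boundary contribution directly — would be more natural; but whichever construction is used, the key step is the passage from the $\theta\log(1/\theta)$ rate of $\mu_0$ to the $\theta/\log(1/\theta)$ rate by a genuinely $\theta$-dependent choice of measure.
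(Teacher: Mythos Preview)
Your treatment of the upper bound is exactly the paper's: it follows immediately by taking the supremum in Theorem~\ref{thm:almot-boundary}.

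For the lower bound, the paper's own argument consists of a single sentence: ``The lower bound in such a result comes from the explicit example by Karamata,'' i.e.\ the uniform measure $\mu_0$ on $\mathbb{S}^1$ together with the formula $\mathbb{P}_{\mu_0}(\ell<r)=\tfrac{2}{\pi^2}\Li_2(r^2)$. You correctly compute that this example yields
\[
\mathbb{P}_{\mu_0}(\ell<1-\theta)=\tfrac13-\tfrac{4}{\pi^2}\,\theta\log(1/\theta)+O(\theta),
\]
so the deficit is of order $\theta\log(1/\theta)$, \emph{not} $\theta/\log(1/\theta)$. In other words, the uniform measure does not establish the lower bound as literally stated in the corollary, and you are right to flag this.

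The most plausible reading is that the corollary contains a typo and the intended lower bound is $\tfrac13-c_1\,\theta\log(1/\theta)$; with that correction the paper's one-line proof is valid and coincides with your computation for $\mu_0$, making your elaborate mixture construction unnecessary. If instead one insists on the stated rate $\theta/\log(1/\theta)$, then the paper offers no proof at all, and your proposal remains only a heuristic plan: the sixteen-term expansion, the refined Karamata-type calculation for non-uniform densities, and the optimization in $(\lambda,N)$ are all outlined but none are carried out, so there is no verified construction achieving that sharper rate.
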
 

The lower bound in such a result comes from the explicit example by Karamata \cite{Karamata}. We would be inclined to believe that the lower bound is the true behaviour of the function $A$ near the boundary, in spite of not being able to prove that at the moment. 

Finally, the last main result of this manuscript is an explicit computation of $A(r)$ for $r \in [0,1/2].$ We first point out that $A(r) \ge \frac14,$ for every $r \in (0,1).$ To see that, one just have to consider the measure $\mu = \frac12\delta_{ \{x\} } + \frac12 \delta_{ \{ -x \} },$ for any $x\in \mathbb{S}^1.$ The result below shows that we cannot do better than that example. 

\begin{theorem}
\label{rsmaller12}
    For any $r\in [0,\frac12]$ it holds that 
    $$A(r) = \frac14.$$
\end{theorem}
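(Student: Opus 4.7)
My plan is to prove the two inequalities $A(r) \ge 1/4$ and $A(r) \le 1/4$ separately. The lower bound follows by testing against the antipodal two-point measure $\mu = \tfrac12(\delta_x + \delta_{-x})$ for any $x \in \mathbb{S}^1$: with probability $1/4$, both $(X_1,X_2)$ and $(X_3,X_4)$ are antipodal pairs, so $\aff(X_1,X_2) = \aff(X_3,X_4)$ is the diameter through $\pm x$ and the second clause of the definition of $f$ gives $\ell = 0 < r$.

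For the upper bound I would follow the strategy suggested in the abstract and first reduce to a one-chord problem. The key geometric observation is that if two lines in $\RR^2$ intersect at a point of norm less than $r$, then each line itself lies within distance $r$ of the origin, since the distance from a point to a line is at most its distance to any specific point on that line. Together with the coinciding-lines case in the definition of $f$, this yields the set inclusion
\begin{align*}
    \{\ell < r\} \subseteq \{d(\aff(X_1,X_2), 0) < r\} \cap \{d(\aff(X_3,X_4), 0) < r\}.
\end{align*}
By the independence of $(X_1,X_2)$ and $(X_3,X_4)$, the theorem would reduce to showing that $q_\mu(r) := \PP_\mu(d(\aff(X_1,X_2),0) < r) \le 1/2$ for every $r \le 1/2$ and every $\mu$; by monotonicity of $q_\mu$ in $r$ it is enough to treat the case $r = 1/2$.

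The last step is a short extremal graph theoretic argument. Writing $X_j = e^{i\theta_j}$ one has $d(\aff(X_1,X_2),0) = |\cos((\theta_1 - \theta_2)/2)|$, so the event $\{d(\aff(X_1,X_2),0) < 1/2\}$ is equivalent to the minor angular distance $\alpha(X_1,X_2)$ exceeding $2\pi/3$. The key combinatorial observation is that no three points of $\mathbb{S}^1$ can have all three pairwise minor angular distances strictly above $2\pi/3$, since otherwise the three consecutive arcs between them (which sum to $2\pi$) would each exceed $2\pi/3$, a contradiction. Consequently, for $n$ IID samples $Y_1,\dots,Y_n \sim \mu$, the simple graph on $\{1,\dots,n\}$ with edges $\{i,j\}$ given by the pairs with $\alpha(Y_i,Y_j) > 2\pi/3$ is triangle-free, and Mantel's theorem bounds its number of edges by $n^2/4$ almost surely. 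Taking expectations gives $\binom{n}{2} q_\mu(1/2) \le n^2/4$, i.e.\ $q_\mu(1/2) \le n/(2(n-1))$, and letting $n \to \infty$ yields $q_\mu(1/2) \le 1/2$, closing the argument.

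I do not foresee any genuine obstacle; the only delicate point is checking the set inclusion above in the degenerate configurations (coinciding endpoints or coinciding chords), which is a short case analysis based on the three-part definition of $f$. After that, the proof is essentially a compact interplay between the one-chord reduction, an elementary trigonometric identity, and Mantel's theorem.
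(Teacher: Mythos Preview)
Your argument is correct. The lower bound and the reduction to the one-chord problem via the inclusion $\{\ell<r\}\subseteq\{d(\aff(X_1,X_2),0)<r\}\cap\{d(\aff(X_3,X_4),0)<r\}$ coincide with the paper's approach (this is inequality~(\ref{ineqintegral}) there). The genuine divergence is in how you bound the one-chord quantity $q_\mu(1/2)\le 1/2$.

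The paper proves this by a variational argument: it first passes to the closed version $\int_{\SS^1}\mu(I_r(x))\,\d\mu(x)$, uses Prokhorov and Portmanteau to show that an extremiser $\mu_\star$ exists, derives from local maximality that $\mu_\star(I_r(y))$ is constant $\mu_\star$-a.e., and then obtains a contradiction by exhibiting, for $r<1/2$, two disjoint arcs $I_r(x_\star)$ and $I_r(y_\star)$ each carrying $\mu_\star$-mass $>1/2$. Your route bypasses all of this: the triangle-freeness of the relation ``minor angular distance $>2\pi/3$'' together with Mantel's theorem and the $n\to\infty$ sampling trick gives the bound directly, with no compactness or Euler--Lagrange step. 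This is shorter and more elementary. On the other hand, the paper's variational set-up is what allows them to push past $r=1/2$ and compute the exact value $2/3$ on $(1/2,\sin(3\pi/14))$; your method would need to be upgraded (e.g.\ to a $K_4$-freeness plus Tur\'an argument) to recover that extension.
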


For the proof of that result, we need to analyse a slightly simpler problem: given $\mu,$ draw two independent random variables $X,Y$ according to $\mu$ on $\mathbb{S}^1.$ For which $\mu$ is the number 
\[
\mathbb{P}_{\mu}(\d(\aff(X,Y),0) < r)
\]
the largest? Let $B(r)$ denote such a maximum - that is, 
\[
B(r) = \sup_{\mu \in \mathcal{M}(\mathbb{S}^1)} \mathbb{P}_{\mu}(\d(\aff(X,Y),0) < r). 
\]
Clearly, we have that $A(r) \le B(r)^2$ pointwise. Since it is easy to show that $B(r) \to 1$ as $r \to 1,$ this inequality produces bad upper bounds for $r$ not small, but the intuition is that, for $r$ sufficiently small, two chords intersecting and the two of them intersecting a small circle should coincide in the extreme cases. We then concentrate on showing that $B(r) = \frac12$ for $r \in [0,1/2].$

For that task, we switch to a slightly modified version of the problem, in which we are able to prove that extremal measures actually exist. For such measures, extremality implies a geometric property of its support - basically, the support has to give lots of mass to sets which, for $r<\frac12,$ are disjoint. The result then follows by the fact that, if we had $B(r) > \frac12,$ we would have the two disjoint parts of the support of $\mu$ have mass  strictly larger than $\frac{1}{2}$ each, a contradiction to the fact that $\mu$ is a probability measure. 

An interesting feature of the numbers $B(r)$ is that they may be represented as suitable \emph{energy minimisation} problems. Contrarily to most of the problems of such flavor, for which one needs an orthogonal basis expansion with nice positivity/negativity properties, the nature of our approach is purely geometric, which might be of help in the study of further problems of such kind in the future. 

This article is structured as follows: in Section \ref{sec:boundary-behavior}, we prove Theorems \ref{thm:boundary} and \ref{thm:almot-boundary}. Then, in Section \ref{sec:small-r}, we study the problem of determining $B(r)$, proving Theorem \ref{rsmaller12}, as well as explicitly computing $B(r)$ for an interval slightly larger than $(0,1/2)$. Finally, in Section \ref{sec:general}, we discuss general facts about the problem of finding $A(r)$ and measures $\mu$ achieving such a supremum, and we additionally provide a classical argument for the computation of the maximal/minimal expected value of the distance of $\aff(X,Y)$ to $0,$ where $X,Y \distas d \mu$ are independent random variables.

\vspace{2mm}

\subsection{Notation} We will write $A \gtrsim B$ if there is an absolute constant $C>0$ such that $A \ge C \cdot B.$ In general, we shall write $\mathbb{P}_{\mu}(E) $ for $\int_{E} \, \d \mu.$ Finally, for an arc $I \subset \mathbb{S}^1$ we shall write $I = (e^{ia},e^{ib})$ to denote the counter-clockwise oriented arc with endpoints $e^{ia}$ and $e^{ib}.$  

\section{Behaviour near $r=1$}\label{sec:boundary-behavior}

\subsection{Discrete measures.}

Let $\mu$ be a discrete measure of the form $\mu = \sum_{i=1}^n \alpha_i \ind_{a_i}$, where $a_i\neq a_j$ for $i\neq j$, $0< \alpha_i < 1$, $n\ge 4$ and $\sum_{i=1}^n\alpha_i =1.$  

\begin{lemma}
\label{lemma1}
    For $\mu$ as defined above and $n\ge 4$, it holds that
    \begin{align*}
        \PP_{\mu} (\ell < 1 ) = \frac{1}{3} - 2\sum_{i=1}^n \alpha_i^2 + \frac83 \sum_{i=1}^n\alpha_i^3 + 3\left( \sum_{i=1}^n \alpha_i^2 \right)^2 - 4\sum_{i=1}^n \alpha_i^4.
    \end{align*}
    If $n=2$, then
    \begin{align*}
        \PP_{\mu} (\ell < 1 ) = 4\alpha_1^2\alpha_2^2
    \end{align*}
    and for $n=3$ it holds that
    \begin{align*}
        \PP_{\mu} (\ell < 1 ) = 4(\alpha_1^2\alpha_2^2+\alpha_1^2\alpha_3^2+\alpha_2^2\alpha_3^2).
    \end{align*}
\end{lemma}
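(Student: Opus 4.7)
The strategy is to decompose the event $\{\ell < 1\}$ according to the coincidence pattern of the four i.i.d.\ samples, reduce it to a simple geometric classification, and then sum the elementary contributions.

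First I would characterize when $\ell(X_1,X_2,X_3,X_4) < 1$ for points in the support of $\mu$. There are only two types of configurations that contribute. \emph{Type (I): the four values $X_1,X_2,X_3,X_4$ are pairwise distinct and the two pairs interlace on the circle.} Basic Euclidean geometry of the disk says that two chords of $\mathbb{S}^1$ with distinct endpoints meet strictly inside the open disk iff the pairs of endpoints interlace, and meet strictly outside the closed disk otherwise; hence exactly such tuples give $\ell<1$. \emph{Type (II): the two chords coincide as lines without being tangent,} i.e., $\{X_1,X_2\}=\{X_3,X_4\}$ with $X_1\ne X_2$; in that case $f$ is by definition the distance from the (honest) chord to the origin, which is strictly less than $1$. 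The somewhat delicate part is ruling out all remaining cases: if exactly two of the four coincide (e.g.\ $X_1=X_3$ but $X_2\ne X_4$) the two lines meet at a point of $\mathbb{S}^1$, giving $\ell=1$; if $X_1=X_2$ (a tangent line) then every point of that line has norm $\ge 1$, forcing $\ell\ge 1$; and four distinct points of $\mathbb{S}^1$ can never be collinear, so Type (I) and Type (II) are the only possibilities and are disjoint.

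Next I would compute each contribution. For Type (II), the event $\{X_1=X_3,\,X_2=X_4,\,X_1\ne X_2\}$ has probability $\sum_{i\ne j}\alpha_i^2\alpha_j^2 = p_2^2 - p_4$, where $p_k \defined \sum_i \alpha_i^k$, and the event $\{X_1=X_4,\,X_2=X_3,\,X_1\ne X_2\}$ contributes the same; since they are disjoint the total is $2(p_2^2-p_4)$. For Type (I), fix an unordered $4$-subset $\{a_p,a_q,a_r,a_s\}$ of support points and look at the $24$ orderings of $(X_1,X_2,X_3,X_4)$: exactly one of the three partitions into two unordered pairs corresponds to the interlacing pattern, contributing $2\cdot 2\cdot 2 = 8$ ordered tuples, so the Type~(I) probability equals $8 e_4(\alpha) = 8\sum_{p<q<r<s}\alpha_p\alpha_q\alpha_r\alpha_s$. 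Therefore
\begin{align*}
\PP_\mu(\ell<1) \;=\; 8\,e_4(\alpha) + 2\bigl(p_2^2-p_4\bigr).
\end{align*}
For $n=2,3$ this already gives the two stated formulas ($e_4=0$ and $p_2^2-p_4 = 2\sum_{i<j}\alpha_i^2\alpha_j^2$).

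Finally, for $n\ge 4$, using $p_1=\sum_i\alpha_i=1$ and Newton's identities iteratively gives
\begin{align*}
e_2 &= \tfrac{1}{2}(1-p_2), \qquad e_3 = \tfrac{1}{6} - \tfrac{1}{2}p_2 + \tfrac{1}{3}p_3,\\
e_4 &= \tfrac{1}{24} - \tfrac{1}{4}p_2 + \tfrac{1}{8}p_2^2 + \tfrac{1}{3}p_3 - \tfrac{1}{4}p_4,
\end{align*}
so that $8e_4 = \tfrac{1}{3} - 2p_2 + p_2^2 + \tfrac{8}{3}p_3 - 2p_4$, and adding $2p_2^2 - 2p_4$ produces exactly the claimed expression $\tfrac{1}{3} - 2p_2 + \tfrac{8}{3}p_3 + 3p_2^2 - 4p_4$.

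The main obstacle is the casework in the first paragraph: one must be careful with the tangent convention $\aff(x,x)$ and with the boundary case where $\ell=1$ (exactly on the circle) to make sure it is correctly \emph{excluded} from $\{\ell<1\}$, since on discrete measures these degenerate events carry nonzero mass. Once that classification is nailed down, the combinatorics of interlacing and the Newton-identity bookkeeping are routine.
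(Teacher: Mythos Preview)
Your proof is correct and follows essentially the same route as the paper: decompose $\{\ell<1\}$ into the ``four distinct, interlacing'' contribution $8e_4(\alpha)$ and the ``same chord twice'' contribution $2(p_2^2-p_4)$, then convert $e_4$ into power sums via Newton's identities. Your write-up is in fact more careful than the paper's on the geometric casework (tangent lines, the boundary case $\ell=1$), which the paper treats in a single sentence.
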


\begin{proof}
     In the cases $n=2$ and $n=3$, the only intersections possible occur when one selects twice the same chord. An analogous observation proves the result in those cases. Hence, we focus on the case in which $n \ge 4.$

    For $n\ge 4$, the intersection of chords inside the circle are given by either selecting four different $a_i$, ordered so that the intersection lies inside the circle, or either by selecting twice the same chord. This yields the equality

    \begin{align*}
       \PP_{\mu} (\ell < 1 ) = 8 \sum_{i=1}^n\alpha_i \sum_{j>i+1}\alpha_j\sum_{i<l<j}\alpha_l\sum_{k>j}\alpha_k + 4\sum_{i=1}^n\alpha_i^2\sum_{j>i}\alpha_j^2.
    \end{align*}
Using the Girard–Newton formulae, we can write

\begin{equation}
\begin{split}
\label{newton1}
 8 \sum_{i=1}^n\alpha_i \sum_{j>i+1}\alpha_j\sum_{i<l<j}\alpha_l\sum_{k>j}\alpha_k & = \frac13 \left(\sum_{i=1}^n\alpha_i  \right)^4 - 2\left(\sum_{i=1}^n\alpha_i \right)^2 \sum_{i=1}^n\alpha_i^2 \\
 & + \frac83 \sum_{i=1}^n\alpha_i \sum_{i=1}^n\alpha_i^3 + \left(\sum_{i=1}^n\alpha_i^2 \right)^2 - 2\sum_{i=1}^n\alpha_i^4 \\
 & =\frac13 - 2\sum_{i=1}^n\alpha_i^2 +\frac83 \sum_{i=1}^n\alpha_i^3 + \left(\sum_{i=1}^n\alpha_i^2 \right)^2 - 2\sum_{i=1}^n\alpha_i^4 ,
 \end{split}
\end{equation}

and
\begin{equation}
\begin{split}
\label{newton2}
4\sum_{i=1}^n\alpha_i^2\sum_{j>i}\alpha_j^2 & = 2\left(\sum_{i=1}^n \alpha_i^2 \right)^2 - 2\sum_{i=1}^n \alpha_i^4.
\end{split}
\end{equation}
The two equalities (\ref{newton1}) and (\ref{newton2}) imply the result.

\end{proof}

\begin{proposition}
\label{prop1}
    For any $\mu$ as above, it holds that
    \begin{align*}
        \PP_\mu (\ell < 1 ) < \frac{1}{3}.
    \end{align*}
\end{proposition}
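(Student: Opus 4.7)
The plan is to apply Lemma~\ref{lemma1}. Multiplying the identity there by $-3$, the claim $\mathbb{P}_\mu(\ell<1) < 1/3$ is equivalent to showing
\[
h(\mu) := 6 p_2 - 8 p_3 + 12 p_4 - 9 p_2^2 > 0, \qquad p_k := \sum_{i=1}^n \alpha_i^k.
\]
The key algebraic observation I would make is the ``completing the square'' identity
\[
12 \sum_{i=1}^n \alpha_i^2 \left(\alpha_i - \tfrac{1}{3}\right)^2 = 12 p_4 - 8 p_3 + \tfrac{4}{3} p_2,
\]
which rewrites
\[
h(\mu) = 12 \sum_{i=1}^n \alpha_i^2 \bigl(\alpha_i - \tfrac{1}{3}\bigr)^2 + \tfrac{1}{3}\, p_2 \bigl(14 - 27 p_2\bigr).
\]
Both summands have definite sign except in the regime $p_2 > 14/27$, and the argument naturally splits at this threshold.

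When $p_2 \le 14/27$ both terms are nonnegative, and their sum vanishes only if $\alpha_i \in \{0, 1/3\}$ for every $i$ and $p_2 \in \{0, 14/27\}$ simultaneously. Since $\alpha_i > 0$ sum to $1$, the first condition forces every $\alpha_i = 1/3$ and hence $n = 3$, contradicting $n \ge 4$. Thus $h(\mu) > 0$ strictly in this range.

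When $p_2 > 14/27$, I would exploit $p_2 = \sum \alpha_i^2 \le \max_i \alpha_i$ to deduce that the largest atom $\alpha_1$ exceeds $1/2$. The constraint $\alpha_1 + \alpha_i \le 1$ for $i \ge 2$ gives $\alpha_i \le 1 - \alpha_1$, so $p_2 \le \alpha_1^2 + (1-\alpha_1)^2$. Retaining only the $i = 1$ term in the sum of squares and using that $P \mapsto \tfrac{1}{3}P(14 - 27P)$ is decreasing on $[7/27, \infty)$, a direct expansion yields
\[
h(\mu) \ge -\tfrac{1}{3}\,\psi(\alpha_1), \qquad \psi(a) := 72 a^4 - 192 a^3 + 184 a^2 - 80 a + 13.
\]
A short computation gives $\psi'(a) = 16(a-1)(18 a^2 - 18 a + 5)$; since the quadratic factor has discriminant $324 - 360 < 0$ and is therefore strictly positive, $\psi$ is strictly decreasing on $(-\infty, 1]$. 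Consequently $\psi(a) \le \psi(1/2) = -1/2$ for $a \in [1/2, 1]$, and so $h(\mu) \ge 1/6 > 0$.

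The hard part will be guessing the sum-of-squares reformulation of $h(\mu)$ in the first display; any other regrouping of $p_2$, $p_3$, $p_4$, $p_2^2$ leaves their sign interaction opaque. Once the identity is in hand, the threshold $14/27$ is forced, and the dominant-atom estimate in the range $p_2 > 14/27$ reduces matters to the routine univariate inequality $\psi(a) < 0$ on $[1/2,1]$.
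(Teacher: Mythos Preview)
Your proof is correct. The identity
\[
h(\mu) = 12\sum_i \alpha_i^2\bigl(\alpha_i-\tfrac{1}{3}\bigr)^2 + \tfrac{1}{3}p_2(14-27p_2)
\]
and the threshold $p_2 = 14/27$ are exactly what the paper uses in its small-$p_2$ regime: the paper phrases the same observation as ``$4\alpha_i^2 - \tfrac{8}{3}\alpha_i + 2 \ge \tfrac{14}{9}$, with minimum at $\alpha_i = \tfrac{1}{3}$''. So for $p_2$ below the threshold the two arguments coincide.

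Where you genuinely diverge is in the regime $p_2 > 14/27$. The paper writes down the Lagrange multiplier cubic, argues that critical configurations have at most two distinct values $\gamma > \beta$, localises $\gamma > 0.42$, and then runs a three-case analysis (including a concavity argument in an auxiliary variable $t = 1/(n-1)$) to verify the inequality. Your route is considerably more direct: the elementary bound $p_2 \le \alpha_1^2 + (1-\alpha_1)^2$ together with monotonicity of $P \mapsto P(14-27P)$ reduces everything to the single univariate inequality $\psi(a) < 0$ on $[1/2,1]$, which you dispatch cleanly via the factorisation of $\psi'$. This is a nicer argument for the proposition as stated.

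One remark worth noting: the paper in fact proves a stronger quantitative statement (its Lemma~\ref{lemma:improved-lower}), namely $h(\mu) \ge 3C\,p_2$ for an absolute $C > 0$, and this sharpening is used later in the proof of Proposition~\ref{prop:only-extreme-continuous} and Proposition~\ref{prop:stab-cont}. Your Case~2 already gives $h(\mu) \ge 1/6 \ge \tfrac{1}{6}p_2$, but your Case~1 bound $h(\mu) \ge \tfrac{1}{3}p_2(14-27p_2)$ degenerates as $p_2 \to 14/27$. If you wish to recover the quantitative form, you would need to revisit that boundary, for instance by shifting the threshold slightly (as the paper does with $14/27 - 0.01$) and absorbing the neighbourhood of $14/27$ into your Case~2 analysis.

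A very minor point: in your Case~1 equality discussion, the actual contradiction is that $\alpha_i \equiv 1/3$ forces $p_2 = 1/3$, which is incompatible with $p_2 \in \{0, 14/27\}$; the hypothesis $n \ge 4$ is not needed there.
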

\begin{proof}
We first prove that the result holds for $n=2,3.$ Indeed, for $n=2$, we have
\begin{align*}
    4x^2(1-x)^2
\end{align*}
for $x \in (0,1)$, which has a maxima of $\frac14$ for $x=\frac12.$ For $n=3$ we have
\begin{align*}
    4(\alpha_1^2\alpha_2^2 + \alpha_1^2\alpha_3^2 + \alpha_2^2\alpha_3^2) < \frac13.
\end{align*}

Observe that it suffices to prove that

\begin{align}
\label{mainineq}
    2\sum_{i=1}^n \alpha_i^2 - \frac83 \sum_{i=1}^n\alpha_i^3 - 3\left( \sum_{i=1}^n \alpha_i^2 \right)^2 + 4\sum_{i=1}^n \alpha_i^4 > 0 
\end{align}
for $\alpha_i$ as described above. As a matter of fact, we shall prove the following improved version below: 

\begin{lemma}\label{lemma:improved-lower} Let $D = \{ (\alpha_i)_{i=1}^n \in \RR^n : \sum_{i=1}^n\alpha_i=1, 0<\alpha_i<1 \}$ and $w: D \rightarrow \RR$ defined by,
\begin{align*}
    w(\alpha_1, \ldots, \alpha_n) =     2\sum_{i=1}^n \alpha_i^2 - \frac83 \sum_{i=1}^n\alpha_i^3 - 3\left( \sum_{i=1}^n \alpha_i^2 \right)^2 + 4\sum_{i=1}^n \alpha_i^4 .
\end{align*} 
Then there exists a numerical constant $C>0$ such that 
\begin{equation*}
w(\alpha_1,\dots,\alpha_d) \ge C \left( \sum_{i=1}^n \alpha_i^2 \right),
\end{equation*}
for all $(\alpha_i)_{i=1}^n \in D.$
\end{lemma}

\begin{proof}
We will prove the result with $C = 0.01,$ but this can easily be improved by carefully redoing the proof below. 

Denote by $S=\sum_{i=1}^n\alpha_i^2$. We first assume that $S<\frac{14}{27}-0.01$.
Observe that $4\alpha_i^2-\frac{8}{3}\alpha_i + 2$ attains its minimum of $\frac{14}{9}$ when $\alpha_i=\frac13$. Thus,
\begin{align}
\label{ineq1}
    \sum_{i=1}^n \alpha_i^2 (4\alpha_i^2-\frac83 \alpha_i + 2 ) \ge \frac{14}{9}\sum_{i=1}^n \alpha_i^2 > 3\left(\sum_{i=1}^n\alpha_i^2 \right)^2 + 0.01 \cdot S,
\end{align}
where the last inequality holds since $S<\frac{14}{27} -0.01$. So, for the rest of the proof we may assume that the minimum value of $w$ given by $(\alpha_i)_{i=1}^n$ such that $S\ge \frac{14}{27} - 0.01.$

Writing the Lagrange multiplier equation for $w$ we conclude that $\alpha_i$, for $i=1,\ldots, n$, must satisfy
\begin{align}
\label{eqraizes}
    16\alpha_i^3-8\alpha_i^2 + (4-12S-0.02)\alpha_i +\lambda=0,
\end{align}
where $\lambda \in \RR$ is a parameter.

Furthermore, we observe that the cubic $(\ref{eqraizes})$ has at most two solutions $\gamma, \beta$ in the interval $(0,1).$ Indeed, if the equation had three positive solution then it would hold that $\frac{1}{16}(4-12S-0.01)>0$, which is a contradiction since $S\ge\frac{14}{27}-0.01>\frac{1}{3}.$

In order to understand where the positive roots lie, we compute the derivative of the equation (\ref{eqraizes}) in the $\alpha_i$ variable: 
\begin{align}
12x^2-4x+1-3S-0.005=0
\end{align}
and observe that its roots are given by
\begin{align}
    x = \frac{1}{6}\left( 1 \pm \sqrt{9S + 0.24 -2} \right).
\end{align}
Using that $S\ge \frac{14}{27}-0.01$ we can conclude that one of the roots satisfies $\gamma > 0.42.$ We just need to consider three cases then:

\vspace{2mm}

\textbf{Case 1:} $\gamma$ does not appear among $(\alpha_i)_{i=1}^n$. Then $\alpha_i=\frac1{n}$ for $i=1,\ldots,n$ and inequality (\ref{ineq1}) becomes
\begin{align*}
\frac{1.99}{n} - \frac83 \cdot\frac1{n^2} - 3\frac1{n^2} + 4\frac{1}{n^3}>0,
\end{align*}
which is true for $n\ge 2.$

\vspace{2mm}

\textbf{Case 2:} $\gamma$ appears once among $(\alpha_i)_{i=1}^n$. In this case, we want to prove that
\begin{align}
\label{ineq2}
    \gamma^2\left(4\gamma^2-\frac83 \gamma +1.99\right) + (n-1)\beta^2\left(4\beta^2-\frac83 \beta+1.99\right) > 3( (n-1)\beta^2 + \gamma^2)^2
\end{align}
where $\beta =\beta(\gamma)= \frac{1-\gamma}{n-1}$. 

Let $g(x)=x^2(4x^2-\frac83x+1.99)$. Inequality (\ref{ineq2}) translates to
\begin{align}
    g(\gamma) + (n-1)g(\beta) > 3( (n-1)\beta^2 + \gamma^2)^2.
\end{align}
The case $n=2$ simplifies to
\begin{align*}
\frac{97}{300} - 4 \gamma^4 + 8 \gamma^3 - \frac{201}{50} \gamma^2 + \frac{\gamma}{50}>0,
\end{align*}
and this inequality can easily be shown to hold for $\gamma \in (0.42,1).$ We suppose hence from now on that $n\ge3.$ For that case, we shall relabel variables as follows: let $k = n-1, 1-\gamma = \theta.$ Then (\ref{ineq2}) is equivalent to 
\begin{align}
 \gamma^2\left(4\gamma^2-\frac83 \gamma +1.99\right) + \frac{\theta^2}{k} \left(4\frac{\theta^2}{k^2}-\frac83 \frac{\theta}{k}+1.99\right) > 3\left( \frac{\theta^2}{k} + \gamma^2\right)^2.
\end{align}
Again, we change our point of view by letting $\frac{1}{k} = t,$ and considering the resulting polynomial in $t.$ The inequality we want to prove then rewrites as
\begin{equation}\label{eq:poly-t}
    p_{\gamma}(t) = 4 \theta^4 t^3 -\left(\frac{8}{3} \theta^3 + 3 \theta^4 \right)\cdot  t^2 + (1.99 \theta^2  -6\theta^2\gamma^2) t -3\gamma^4 + g(\gamma) > 0,
\end{equation}
where $t$ is supposed to be of the form $1/k, k \ge 2, \, k \in \mathbb{N}.$ We will show slightly more, by showing that the function defined in (\ref{eq:poly-t}) is, as a matter of fact, \emph{concave} for $t \in (0,1/2),$ and that the inequality indeed holds at the endpoints. 

As a matter of fact, for $t=0,$ we obtain the inequality 
$$\gamma^4 - \frac83 \gamma^3 + 1.99 \gamma^2 > 0,$$
which is easily seen to be true for $\gamma \in (0.42,1).$ Similarly, for $t = 1/2,$ the associated polynomial equation we obtain is 
\[
-\frac{9}{4}\gamma^4 + 5\gamma^3 -\frac{703}{200}\gamma^2 +\frac{101}{100}\gamma + \frac{47}{60} > 0,
\]
which may be once more directly verified for $\gamma \in (0.42,1).$ Finally, notice that 
$$p_{\gamma}''(t) = 24 \theta^4 t - 2 \cdot \left( \frac{8}{3} \theta^3 + 3\theta^4\right),$$
which is negative as long as we have $t < \frac{2}{9 \theta} + \frac{1}{4}.$ As $\theta = 1-\gamma < 0.58,$ the right-hand side of that upper bound on $t$ is at least $0.63,$ which is larger than $1/2,$ and hence $p_{\gamma}$ is indeed concave for $t$ between $0$ and $1/2,$ which concludes the desired assertion. 




\vspace{2mm}

\textbf{Case 3:} $\gamma$ appears twice among $(\alpha_i)_{i=1}^n$. In this case, it holds that $\delta=\delta(\gamma)=\frac{1-2\gamma}{n-2}$ and that $0.42<\gamma\le0.5.$ The hypothesis $S \ge \frac{14}{27}-0.01$ implies that
\begin{align*}
    2\gamma^2 &+ \frac{(1-2\gamma)^2}{n-2}  \ge \frac{14}{27}-0.01 \\
\Rightarrow     n-2 &\le \frac{(1-2\gamma)^2}{\frac{14}{27}-0.01-2\gamma^2} \le 0.165,
\end{align*}
which implies that $n=2.$ In this case we have $\gamma = 1/2,$ but since $2\gamma^2 = 1/2 < \frac{14}{27} - 0.01,$ this case is empty, finishing our analysis, and completing the proof of Lemma \ref{lemma:improved-lower}. 
\end{proof}

Clearly, the proposition follows at once from Lemma \ref{lemma:improved-lower}, and we are done. 
\end{proof}

\begin{remark} The value $C = 0.01$ of the constant given by the conclusion of Lemma \ref{lemma:improved-lower} is likely very far from optimal, as a quick scan through the analysis above shows. We would, nonetheless, expect that essentially the same proof, together with some additional carefully chosen estimates, should be able to yield the sharp constant in that result. 
\end{remark}

\subsection{Continuous measures are (the only) optimal ones}

By the Lebesgue decomposition theorem, any measure $\mu$ can be written as $\mu = \mu_c + \mu_d,$ where $\mu_c$ is the continuous part, that is, $\mu_c(\{x\}) = 0$ for any $x\in \SS^1$ and $\mu_d$ is the discrete part. We prove first that if $\mu$ has no discrete part then it is an extremiser of $A(1).$

\begin{proposition}\label{prop:continuous-extreme}
    Let $\mu$ be a measure with no discrete part, that is, $\mu_d \equiv 0.$ Then
    $$ \PP_{\mu} (\ell < 1 ) = \frac13.$$
\end{proposition}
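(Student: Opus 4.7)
The plan is to exploit the combinatorial fact that, for four distinct points on a circle, exactly one of the three ways of pairing them into two chords produces a crossing pair, combined with the exchangeability of the independent sample $(X_1,X_2,X_3,X_4)$. Atomlessness of $\mu$ will let us discard all ``degenerate'' configurations for free, reducing the problem to this symmetry.

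First I would verify that the event $E = \{X_i \neq X_j \text{ for all } i \neq j\}$ has full $\mu^4$-measure. This follows from Fubini and atomlessness: $\mu^2(\{(x,y):x=y\}) = \int \mu(\{x\})\,\d\mu(x) = 0$, and there are only six pairs to union over. On $E$, both $\aff(X_1,X_2)$ and $\aff(X_3,X_4)$ are genuine chords (not tangents), they cannot coincide (else a single line would meet $\SS^1$ in at least four points), and whenever they meet at a single point $x_0$ one has $x_0 \notin \SS^1$ (otherwise $x_0$ would be a shared endpoint of the two chords, contradicting distinctness).

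Next I would introduce, for each of the three pairings $\pi \in \{12|34,\,13|24,\,14|23\}$, the indicator $\chi_\pi(X_1,X_2,X_3,X_4)$ that the corresponding pair of chords crosses strictly inside the open unit disk. On $E$, the condition $\ell<1$ is equivalent to $\chi_{12|34}=1$ (parallel chords give $\ell=\infty>1$ by the defining convention of $f$, so they harmlessly fall on the non-crossing side). The elementary observation that, for any four cyclically-ordered points on $\SS^1$, only the ``diagonal'' pairing interleaves then yields
\[
\chi_{12|34} + \chi_{13|24} + \chi_{14|23} = 1 \qquad \text{on } E.
\]
Since the four samples are independent and identically distributed, every coordinate permutation preserves their joint law, hence the three indicators have the same expectation; integrating the identity above and dividing by three produces $\PP_\mu(\ell<1) = \frac{1}{3}$.

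The main (mild) subtlety is the case analysis in the second step: verifying that almost surely the meeting point of the two affine spans lies strictly inside or strictly outside the disk, never on $\SS^1$. This reduces at once to the distinctness from the first step together with the two-intersection bound for a line meeting a circle, which is exactly why the continuous hypothesis suffices and the proof closes cleanly.
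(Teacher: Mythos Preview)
Your proof is correct and takes a cleaner, more direct route than the paper's. Both arguments rest on the same combinatorial fact---for four distinct points on the circle, exactly one of the three pairings into two chords produces a crossing---but the paper accesses it through a discretisation: it covers $\SS^1$ by small arcs, conditions on the four samples landing in four distinct arcs (24 permutations, of which 8 give crossing chords, hence conditional probability $1/3$), and then sends the mesh to zero, using atomlessness only to guarantee that such covers exist and that the probability of two samples sharing an arc vanishes. You instead work directly at the level of points: atomlessness kills the diagonal in one line via Fubini, and then exchangeability together with the identity $\chi_{12|34}+\chi_{13|24}+\chi_{14|23}=1$ on the distinct-points event gives $1/3$ immediately. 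Your argument is shorter and avoids any limiting procedure; the paper's discretisation, by contrast, is not wasted effort, since the same framework is reused in the proofs of Propositions~\ref{prop:only-extreme-continuous} and~\ref{prop:stab-cont} to handle measures with nontrivial atomic part, where one genuinely needs to approximate by finitely supported measures and invoke Lemma~\ref{lemma1}.
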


\begin{proof}
    Let $X_1, X_2, X_3, X_4$ be i.i.d random variables with density given by $\mu$ and $I_1, I_2, I_3, I_4 \in \SS^1$ disjoint intervals. Denote by $B$ the event
    \begin{align*}
        B=\{ X_1 \in I_{\sigma(1)}, X_2 \in I_{\sigma(2)}, X_3 \in I_{\sigma(3)}, X_4 \in I_{\sigma(4)}, \text{ for any }\sigma \text{ permutation} \},
    \end{align*}
    and observe that $\mathbb{P}(B) = \sum_{\sigma} \mu(I_1)\mu(I_2)\mu(I_3)\mu(I_4) = 24\mu(I_1)\mu(I_2)\mu(I_3)\mu(I_4).$
On the other hand, it holds that
\begin{align*}
    \mathbb{P}(\{ \ell < 1\} \cap B) =8\mu(I_1)\mu(I_2)\mu(I_3)\mu(I_4),
\end{align*}
so we conclude that
    \begin{align*}
        \PP_\mu(\ell <1 | B) = \frac{1}{3}.
    \end{align*}
Given any positive integer $k$ we take $\Lambda_k$ to be a finite covering of disjoint intervals of the form $(a,b]$ of $\SS^1$ satisfying the following condition: for any interval $I \in \Lambda_k$ it holds that $\frac{1}{2k} < \mu(I) < \frac{2}{k}$.
Since the measure has no discrete part, such a covering always exists.
Let $A_k$ be the event $A_k= (X_1, X_2, X_3, X_4
\text{ belong to different intervals of }\Lambda_k )$ and observe that it satisfies 
\begin{align*}
    \mu(A_k) \ge 1-6\sum_{I \in \Lambda_k}\mathbb{P}(X_1, X_2 \in I) \ge 1 - \frac{48}{k}.
\end{align*}

Using this property, we can write 
\begin{align*}
    \PP_\mu(\ell < 1 ) = \PP_\mu(\ell < 1 \cap A_k ) + \PP_\mu(\ell < 1 \cap A_k^c) =\frac13 \cdot \mu(A_k) + \PP_\mu(\ell < 1 \cap A_k^c)
\end{align*}
and using that $\mu(A_k)$ converges to $1$ as $k$ tends to infinity, we conclude that $\PP_\mu(\ell < 1 ) = \frac13.$
 \end{proof}
We are now able to prove the converse, that is, that the only measures satisfying equality are indeed purely continuous ones. 

\begin{proposition}\label{prop:only-extreme-continuous}
    Let $\mu = \sum_{i=1}^n \alpha_i\delta_{\{x_i\}} + \mu_c$ be a measure in $\SS^1,$ with $\mu_c$ its continuous part satisfying $\mu_c(\SS^1) = p, 0<p<1.$  Then
    \begin{align*}
        \PP_\mu(\ell < 1) < \frac13.
    \end{align*}
\end{proposition}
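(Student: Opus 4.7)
The plan is to extend the polynomial formula from Lemma \ref{lemma1} to arbitrary probability measures on $\SS^1$ and then to invoke the quantitative Lemma \ref{lemma:improved-lower} after a mass-rescaling step in order to conclude strict positivity of $\tfrac{1}{3}-\PP_\mu(\ell<1)$.

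The first step is to partition $(\SS^1)^4$ into five Borel sets $E_1,\dots,E_5$ according to the equality pattern of the coordinates $(y_1,y_2,y_3,y_4)$: all four distinct; exactly one coinciding pair with the other two distinct from it and from each other; exactly three coinciding; two disjoint pairs of equal coordinates; or all four equal. Since $\mu_c$ is atomless, every coincidence in the product measure must take place at an atom of $\mu$, so standard multinomial bookkeeping yields
\[
\mu^{\otimes 4}(E_1)=1-6A_2+8A_3+3A_2^2-6A_4, \qquad \mu^{\otimes 4}(E_4)=3(A_2^2-A_4),
\]
with analogous formulas for $E_2,E_3,E_5$, where $A_k=\sum_i \alpha_i^k$. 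These are the exact same expressions one obtains in the purely discrete setting.

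The second step is to compute $\ind_{\{\ell<1\}}$ on each piece. On $E_1$, exactly one of the three pairings of four distinct points on $\SS^1$ into two chords gives crossing chords; by the full permutation symmetry of $\mu^{\otimes 4}$ our particular pairing $(y_1,y_2)(y_3,y_4)$ is the crossing one with probability $\tfrac{1}{3}$. On $E_2$, $E_3$ and $E_5$, one of the affine spans degenerates to a tangent or the two chords share an endpoint on $\SS^1$, forcing $\ell\ge 1$. On $E_4$, the sub-pattern $\{1,2\}\sqcup\{3,4\}$ produces two disjoint tangents (so $\ell\ge 1$), while each of the sub-patterns $\{1,3\}\sqcup\{2,4\}$ and $\{1,4\}\sqcup\{2,3\}$ produces two coinciding chords with common distance $d(\aff,0)<1$. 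Adding the contributions gives
\[
\PP_\mu(\ell<1)\;=\;\tfrac{1}{3}\mu^{\otimes 4}(E_1)+\tfrac{2}{3}\mu^{\otimes 4}(E_4)\;=\;\tfrac{1}{3}-w(\alpha),
\]
where $w(\alpha)=2A_2-\tfrac{8}{3}A_3-3A_2^2+4A_4$ is the same defect polynomial as in Lemma \ref{lemma1}.

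The third step is the positivity of $w$ for $\mu$ with at least one atom, which is where I expect the real work. When there is a single atom of mass $\alpha_1=q\in(0,1)$, the identity $w(\alpha)=\alpha_1^2\bigl(2-\tfrac{8}{3}\alpha_1+\alpha_1^2\bigr)$ suffices, since the quadratic factor has negative discriminant. When $n\ge 2$, I rescale by $\tilde\alpha_i=\alpha_i/q$ so that $(\tilde\alpha_i)\in D$, apply Lemma \ref{lemma:improved-lower} to obtain $q^4 w(\tilde\alpha)\ge Cq^2 A_2$, and then observe that
\[
w(\alpha)-q^4 w(\tilde\alpha)\;=\;(1-q)\Bigl[2(1+q)A_2-\tfrac{8}{3}A_3\Bigr]\;\ge\;\tfrac{4}{3}(1-q)A_2,
\]
where the last inequality uses the elementary bound $A_3\le q A_2$. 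Adding the two lower bounds yields $w(\alpha)\ge A_2\bigl[Cq^2+\tfrac{4}{3}(1-q)\bigr]>0$, since $A_2>0$ whenever $\mu_d\ne 0$. This final scaling step, which transfers the quantitative discrete estimate of Lemma \ref{lemma:improved-lower} to the mass-$q$ discrete restriction of the mixed measure $\mu$, is the technical heart of the argument; everything else reduces to clean combinatorics on the equality patterns.
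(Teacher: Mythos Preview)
Your argument is correct and follows a genuinely different route from the paper. The paper discretises the continuous part $\mu_c$ by covering $\SS^1\setminus\{x_1,\dots,x_n\}$ with small arcs, forms discrete approximations $\mu_k\rightharpoonup\mu$, applies the exact formula of Lemma~\ref{lemma1} to $\mu_k$, and then uses Portmanteau's theorem on the open event $\{\ell<1\}$ to pass to the limit; this forces separate bookkeeping for the small cross-terms coming from the discretised continuous piece (inequality~\eqref{ineq3}) before the rescaling step. You bypass all of this by partitioning $(\SS^1)^4$ according to coincidence patterns and using only the fact that, for an atomless $\mu_c$, coincidences are supported on the atoms. This yields the \emph{exact} identity $\PP_\mu(\ell<1)=\tfrac13-w(\alpha)$ directly for the mixed measure, with no limiting procedure and no extra error terms. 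Your approach therefore shows that the polynomial formula of Lemma~\ref{lemma1} extends verbatim to measures with a continuous component, which is a cleaner statement than what the paper's discretisation extracts. The final step---rescaling the atom masses by $q=1-p$ and invoking Lemma~\ref{lemma:improved-lower}---is common to both proofs, though your presentation of the difference $w(\alpha)-q^4w(\tilde\alpha)$ together with the elementary bound $A_3\le qA_2$ is a bit more transparent than the paper's version of the same computation.
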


\begin{proof}
    For every positive integer $k$, we take a covering $\Lambda_k$ of $\SS^1 \backslash \{x_1, \ldots, x_n\}$ consisting of disjoint intervals $I_j^k$ satisfying $ \frac{1}{2k} < \mu(I_j^k) < \frac{2}{k}.$ For every $I_j^k$ we select a point $y_j^k \in I_j^k$ and define the discrete measure
    \begin{align*}
        \mu_k = \sum_{i=1}^n\alpha_i \delta_{ \{x_i\}} + \sum_{j=1}^l \mu(I_j^k) \delta_{ \{y_j^k\}}.
    \end{align*}
Observe that it converges weakly to $\mu$.

It is enough to prove that $\liminf_{k\rightarrow \infty} \PP_{\mu_k}(\ell < 1) < \frac{1}{3}$ and the result then follows by Portmanteau's Theorem. Indeed, since the set where $\ell <1$ is open it holds that $\liminf_{k\rightarrow\infty} \PP_{\mu_k}(\ell < 1) \ge \PP_\mu(\ell <1).$

Observe that
\begin{align}\label{eq:discretised}
        \PP_{\mu_k} (\ell < 1 ) = & \frac{1}{3} - 2\sum_{i=1}^n \alpha_i^2 + \frac83 \sum_{i=1}^n\alpha_i^3 + 3\left( \sum_{i=1}^n \alpha_i^2 \right)^2 - 4\sum_{i=1}^n \alpha_i^4 \\
        & - 2\sum_{j=1}^l \mu(I_j^k)^2 + \frac83 \sum_{j=1}^l\mu(I_j^k)^3 + 3\left( \sum_{j=1}^l \mu(I_j^k)^2 \right)^2 - 4\sum_{j=1}^l \mu(I_j^k)^4 \\
        & + \sum_{j=1}^l\sum_{i=1}^n\alpha_i^2\mu(I_j^k)^2.
\end{align}
We first prove that 
\begin{align}
\label{ineq3}
2\sum_{j=1}^l \mu(I_j^k)^2 - \frac83 \sum_{j=1}^l\mu(I_j^k)^3 + 4\sum_{j=1}^l \mu(I_j^k)^4 - \sum_{j=1}^l\sum_{i=1}^n\alpha_i^2\mu(I_j^k)^2 > 3\left( \sum_{j=1}^l \mu(I_j^k)^2 \right)^2
\end{align}
by implementing a similar argument as in the proof of the first case of Proposition \ref{prop1}.

From the properties of the intervals $I_j^k$, it follows that for $k$ sufficiently large it holds that
\begin{align}
\label{ine5}
    \sum_{j=1}^l \mu(I_j^k)^2 \le \frac{8p}{k} < \frac{5}{27}.
\end{align}
We first note that
\begin{align}
\sum_{j=1}^{l}\mu(I_j^k)^2 \left( 2-\sum_{i=1}^n\alpha_i^2 - \frac83 \mu(I_j^k) + 4 \mu(I_j^k)^2 \right) > \sum_{j=1}^{l}\mu(I_j^k)^2\left( 1 - \frac83 \mu(I_j^k) + 4 \mu(I_j^k)^2 \right),
\end{align}
and further that the polynomial $4x^2 -\frac83x +1$ attains its minimum of $\frac59$ for $x=\frac13.$ Thus
\begin{align*}
\sum_{j=1}^{l}\mu(I_j^k)^2\left( 2-\sum_{i=1}^n\alpha_i^2 - \frac83 \mu(I_j^k) + 4 \mu(I_j^k)^2 \right) > \frac59 \sum_{j=1}^{l}\mu(I_j^k)^2 > 3\left(\sum_{j=1}^{l}\mu(I_j^k)^2 \right)^2
\end{align*}
where the last inequality holds because of property (\ref{ine5}).

To conclude the proof, we will prove that
\begin{align}\label{eq:discrete-scaled}
 (2-C)\sum_{i=1}^n \alpha_i^2 - \frac83 \sum_{i=1}^n\alpha_i^3 - 3\left( \sum_{i=1}^n \alpha_i^2 \right)^2 + 4\sum_{i=1}^n \alpha_i^4 >0
\end{align}
and observe that the inequality does not depend on $k$. Recall that $$\sum_{i=1}^n \alpha_i = 1-p.$$

So, if we replace $\alpha_i$ by $\frac{\alpha_i}{1-p},$ since  we have $$\sum_{i=1}^n \frac{\alpha_i}{1-p} = 1,$$ Proposition \ref{prop1} implies that 
\begin{align*}
\frac{2}{(1-p)^2}\sum_{i=1}^n\alpha_i^2 - \frac{1}{(1-p)^3}\frac83 \sum_{i=1}^n \alpha_i^3 +\frac{4}{(1-p)^4}\sum_{i=1}^n \alpha_i^4 > & \frac{3}{(1-p)^4}\left( \sum_{i=1}^n \alpha_i^2 \right)^2 \cr 
 & + \frac{C}{(1-p)^2} \sum_{i=1}^n \alpha_i^2. 
\end{align*}
Rewriting it we get
\begin{align*}
(2-C)\sum_{i=1}^n\alpha_i^2 - \frac83 \sum_{i=1}^n \alpha_i^3 +4\sum_{i=1}^n \alpha_i^4 + (2p^2 -4p)\sum_{i=1}^n \alpha_i^2 +\frac83 p\sum_{i=1}^n\alpha_i^3  > 3\left( \sum_{i=1}^n \alpha_i^2 \right)^2,
\end{align*}
so it suffices to prove that
\begin{align*}
(2p -4)\sum_{i=1}^n \alpha_i^2 +\frac83 \sum_{i=1}^n\alpha_i^3 < 0.
\end{align*}
This is true, since
\begin{align*}
  \frac83  \sum_{i=1}^n\alpha_i^3 < \frac{8}{3}(1-p)\sum_{i=1}^n\alpha_i^2 \le   (4-2p)\sum_{i=1}^n\alpha_i^2,
 \end{align*}
completing the proof of the inequality. Since it does not depend on $k$, the result follows.
\end{proof}

\begin{remark}\label{rmk:continuity} Notice that, if $\alpha_i < \delta$ for $i=1,\dots,n,$ then the argument above implies that 
\begin{equation}\label{eq:almost-equality}
\mathbb{P}_{\mu}(\ell < 1) \ge \frac13 - C \delta^2.
\end{equation}
This shows a continuity-type result around continuous measures: if the discrete part of the measure $\mu$ has very small individual atoms - and hence is, in a way, close to being continuous -, then $\mu$ almost attais $A(1).$ 

On the other hand, suppose that one has the information that (\ref{eq:almost-equality}) holds for a certain measure $\mu.$ The next result shows the \emph{converse} of the statement above; that is, in case (\ref{eq:almost-equality}) holds, then $\mu$ has discrete part composed by small individual atoms. 
\end{remark}

\begin{proposition}\label{prop:stab-cont} Let $\mu$ be as in Proposition \ref{prop:only-extreme-continuous}, and suppose that (\ref{eq:almost-equality}) holds. Then we have that
\[
\sum_{p \in \mathbb{S}^1 \colon \mu(\{p\}) \neq 0} \mu(\{p\})^2 \lesssim \delta^2. 
\]
As a consequence, it holds that $\mu(\{p\}) \lesssim \delta,$ for any $p \in \mathbb{S}^1.$
\end{proposition}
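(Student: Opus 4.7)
The plan is to show that the argument of Proposition~\ref{prop:only-extreme-continuous} secretly establishes the stronger \emph{quantitative} bound
\[
\frac{1}{3} - \PP_{\mu}(\ell < 1) \ge c \sum_{i=1}^{n} \alpha_i^2
\]
for an absolute constant $c>0$ (independent of $n$ and $p$). The proposition then follows immediately by combining this with the hypothesis (\ref{eq:almost-equality}), which yields $\sum_i \alpha_i^2 \le (C/c)\delta^2$. The pointwise bound $\mu(\{p\}) \lesssim \delta$ is then trivial, since $\mu(\{p\})^2$ either equals some $\alpha_i^2$ or vanishes, and is therefore dominated by the full sum.

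To extract the quantitative lower bound, I would revisit the identity (\ref{eq:discretised}) for $\PP_{\mu_k}(\ell < 1)$, where $\mu_k$ is the discretization constructed in the proof of Proposition~\ref{prop:only-extreme-continuous}. Rearranging, $\frac{1}{3} - \PP_{\mu_k}(\ell < 1)$ decomposes naturally into a \emph{covering piece}, involving the weights $\mu(I_j^k)$ together with the cross term, and an \emph{atomic piece}, involving only the $\alpha_i$. The proof of Proposition~\ref{prop:only-extreme-continuous} already shows via (\ref{ineq3}) that the covering piece is nonnegative for $k$ large. For the atomic piece, the same proof establishes through (\ref{eq:discrete-scaled}) that
\[
2\sum_i \alpha_i^2 - \frac{8}{3}\sum_i \alpha_i^3 - 3\Bigl(\sum_i \alpha_i^2\Bigr)^{2} + 4\sum_i \alpha_i^4 \ge c\sum_i \alpha_i^2,
\]
where $c>0$ is the universal constant supplied by Lemma~\ref{lemma:improved-lower} applied to the renormalized weights $\alpha_i/(1-p)$. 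Summing the two contributions yields the uniform bound $\frac{1}{3} - \PP_{\mu_k}(\ell < 1) \ge c\sum_{i=1}^n \alpha_i^2$ for all sufficiently large $k$.

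To finish, Portmanteau's theorem applied to the open set $\{\ell < 1\}$, together with $\mu_k \rightharpoonup \mu$ weakly, gives $\PP_{\mu}(\ell < 1) \le \liminf_{k\to\infty} \PP_{\mu_k}(\ell < 1)$. Passing to the limit—with the right-hand side of our uniform bound being $k$-independent—yields $\frac{1}{3} - \PP_{\mu}(\ell < 1) \ge c\sum_i \alpha_i^2$, and the hypothesis concludes the proof. The main—and really only—obstacle is to verify that the constant $c$ in the atomic inequality is genuinely independent of $p\in(0,1)$, $n$, and the particular configuration of $\alpha_i$'s. This can be read off directly from the rearrangement preceding (\ref{eq:discrete-scaled}): the absolute constant from Lemma~\ref{lemma:improved-lower} is preserved under the rescaling by $(1-p)$, while the residual $p$-dependent terms $(2p-4)\sum_i\alpha_i^2 + \frac{8}{3}\sum_i\alpha_i^3$ contribute nonpositively, as is already shown there.
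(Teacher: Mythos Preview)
Your proposal is correct and follows essentially the same route as the paper's own proof: discretise via $\mu_k$, use the identity (\ref{eq:discretised}) together with (\ref{ineq3}) to discard the covering piece, invoke (\ref{eq:discrete-scaled}) to bound the atomic piece below by $c\sum_i\alpha_i^2$, and pass to the limit with Portmanteau. Your additional remark that the constant $c$ is uniform in $p$ is exactly the point the paper leaves implicit in its appeal to (\ref{eq:discrete-scaled}).
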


\begin{proof} We employ again a discretisation argument, as in the proof of Proposition \ref{prop:only-extreme-continuous}. By doing so, after computing (\ref{eq:discretised}) and combining with (\ref{ineq3}), Portmanteau's theorem implies that 
\begin{align*}
\frac13 - C\delta^2 & \le \mathbb{P}_{\mu}(\ell < 1) \le \liminf_{k \to \infty} \mathbb{P}_{\mu_k}(\ell < 1) \cr 
 & \le \frac13 - 2\sum_{i=1}^n \alpha_i^2 + \frac83 \sum_{i=1}^n\alpha_i^3 + 3\left( \sum_{i=1}^n \alpha_i^2 \right)^2 - 4\sum_{i=1}^n \alpha_i^4. 
\end{align*}
Using now (\ref{eq:discrete-scaled}), we readily obtain that 
\[
\sum_{i=1}^n \alpha_i^2 \le C' \delta^2,
\]
for some absolute $C'>0.$ The result then follows directly from there. 
\end{proof}

\begin{remark} Note that the proof of Proposition \ref{prop:stab-cont} shows a stronger statement about the resemblance of such measures to extremisers of our original result: indeed, it shows that the probability (according to $\mu$) of drawing the same point twice is small, which is a stronger way of saying that the measure $\mu$ is close to being continuous.

Moreover, Proposition \ref{prop:stab-cont} is indeed \emph{sharp}: let $\tilde{\mu}_n$ denote a discrete measure with atoms at the vertices of a regular $n-$sided polygon inscribed in $\mathbb{S}^1,$ but now assign to each vertex except one the weight $\frac{1-\delta}{n-1},$ and to the remaining vertex assign weight $\delta>0.$ Then, by Lemma \ref{lemma1}, for $n$ sufficiently large and $\delta > 0$ sufficiently small, we have that 
\[
\left| \mathbb{P}_{\tilde{\mu}_n}(\ell < 1) - \frac{1}{3} \right| \sim \delta^2,
\]
while we do have a vertex with weight $\delta,$ and moreover
\[
\sum_{p \in \mathbb{S}^1 \colon \tilde{\mu}_n(\{p\}) \neq 0} \tilde{\mu}_n(\{p\})^2 \ge \delta^2. 
\]
\end{remark}

\subsection{Behaviour near the boundary} In this final part, we shall prove Theorem \ref{thm:almot-boundary}.

\begin{figure}[!ht]
\begin{tikzpicture}[scale=2]
\tkzDefPoints{0/0/O, 1/0/X1}
\tkzDefPoints{0/0/O, .5/0/Y1}
\tkzDefPoints{0/0/O, 0.85/0/Z1}
\tkzDefPoint(0.48,0.15){R}
\tkzDefPoint(1,0){X}
\tkzDefPoint(0,0){O}
\tkzDefPoint(-0.85,0.53){Z}
\tkzDefPoint(0.85,0.52678){H}
\tkzDefPoint(0.85,-0.52678){F}
\tkzDefPoint(1.04,-0.1){N}
\tkzDefPointsBy[rotation=center O angle 7*360/17](H){I}
\tkzDefPoint(-0.94,-0.35){J}
\tkzDefPoint(0.425,0){K}


\draw [arrows=|-|,black,line width = 2pt,domain=-31.8:31.8] plot ({cos(\x)}, {sin(\x)});

\tkzDrawCircle(O,X1)
\tkzDrawCircle[dashed](O,Z1)
\tkzDrawSegments(F,H)
\tkzDrawSegments(O,Z1)
\tkzLabelPoint[above](K){$1-\theta$}
\tkzLabelPoint[right](N){$I_{\theta}$}
\tkzDrawSegments[arrows=<->](O,Z1)

\end{tikzpicture}
\label{fig:diagonals}
\caption{An arc $I_{\theta}$ whose length is $\alpha_{\theta}.$}
\end{figure}
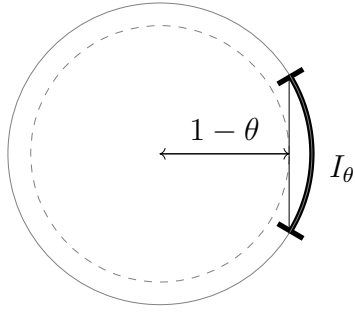

\begin{proof} We let, first of all, $\alpha_{\theta} = \arccos(1-\theta).$ In what follows, $l(J)$ denotes the (circular) length of the subset $J \subset \mathbb{S}^1.$ 

Our first observation is that, if an arc $I_{\theta}$ has length $l(I_{\theta}) = \alpha_{\theta},$ then any two chords whose endpoints lie in $I_{\theta}$ intersect \emph{outside} the disk $C_{1-\theta}.$ With that in mind, we divide into cases:

\vspace{2mm}

\noindent \textbf{Case 1:} the measure $\mu$ has continuous part - denoted by $\mu_c$ - satisfying $\mu_c(\mathbb{S}^1) \ge \frac{1}{C},$ where $C>0$ is a large numerical constant, to be determined later. 

In that case, we claim that there is an interval $I' \subset \mathbb{S}^1$ such that $l(I') \le \alpha_{\theta}$ and $\mu_c(I') \ge c \theta^{1/2},$ where $c>0$ is an absolute constant. Indeed, we may divide the circle into $\sim 1/\theta^{1/2}$ disjoint arcs of length at most $\alpha_{\theta},$ and the claim follows by pigeonholing. 

Consider then the measure $\nu = \mu_c(I')^{-1} \cdot \mu_c|_{I'}.$ This is, by definition, a probability measure defined on $\mathbb{S}^1.$ Moreover, it is, by definition, a \emph{continuous} measure. Hence, from Proposition \ref{prop:continuous-extreme}, 
\[
\mathbb{P}_{\nu}(\ell < 1) = \frac{1}{3}.
\]
That, together with the fact that $\mu_c(I') \ge c \theta^{1/2},$ implies that
\begin{equation}\label{eq:lower-bound-first}
\int_{(\mathbb{S}^1)^4} \ind_{ \{ \ell <1 \}} \d(\mu_c|_{I'}) \otimes \d(\mu_c|_{I'}) \otimes \d(\mu_c|_{I'}) \otimes \d(\mu_c|_{I'}) \ge c' \theta^2. 
\end{equation}
Since we also know that 
\[
\mathbb{P}_{\mu}(\ell < r) \le \mathbb{P}_{\mu}(\ell < 1) - \int_{(\mathbb{S}^1)^4} \ind_{ \{ \ell <1 \}} \d(\mu_c|_{I'}) \otimes \d(\mu_c|_{I'}) \otimes \d(\mu_c|_{I'}) \otimes \d(\mu_c|_{I'}),
\]
the result readily follows from (\ref{eq:lower-bound-first}) in this case. 

\vspace{2mm}

\noindent \textbf{Case 2:} the continuous part satisfies $\mu_c(\mathbb{S}^1) \le \frac{1}{C}.$  Hence, the point masses dominate over that part, and we have to ajust accordingly. We divide again into subcases. 

\vspace{2mm}

\textbf{Case 2.1:} there is a point $p_0 \in \mathbb{S}^1$ with $\mu(\{p_0\}) \ge \frac{1}{C} \cdot \theta^{1/2}.$ In that case, by using a discretisation argument similar to the one of the proof of Proposition \ref{prop:only-extreme-continuous}, together with inequality (\ref{eq:discrete-scaled}), we conclude that 
\[
\mathbb{P}_{\mu}(\ell < r) \le \mathbb{P}_{\mu}(\ell < 1) \le \frac13 - C \theta,
\]
which finishes this case as well. 

\vspace{2mm}

\textbf{Case 2.2:} One has $\mu(\{p\}) < \frac{1}{C}\cdot \theta^{1/2}$ for all $p \in \mathbb{S}^1.$ Take then $I''$ arc of $\mathbb{S}^1$ with $l(I'') \le \alpha_{\theta},$ and $\mu(I'')$ maximal. Consider the normalised measure $\tilde{\nu} = \mu(I'')^{-1} \cdot \mu|_{I''}.$ By assumption, we have $\tilde{\nu}(\{p\}) \le \frac{1}{C'},$ for all $p \in \mathbb{S}^1,$ where $C'$ depends only upon $C.$ 

Resorting again to the proof of Proposition \ref{prop:only-extreme-continuous}, we see that, if $\tilde{\nu}(\{p\}) \le \frac{1}{C'},$ then Remark \ref{rmk:continuity} implies that making $C>0$ sufficiently large, then we may make 
\[
\mathbb{P}_{\tilde{\nu}}(\ell < 1) \ge \frac{1}{4}.
\]
In particular, since $I''$ maximises $\mu$ among intervals of length at most $\alpha_{\theta},$ we have 
\[
\int_{(\mathbb{S}^1)^4} \ind_{ \{ \ell <1 \}} \d(\mu|_{I''}) \otimes \d(\mu_c|_{I''}) \otimes \d(\mu_c|_{I''}) \otimes \d(\mu_c|_{I''}) \ge \frac{1}{4} \mu(I'')^4 \gtrsim \theta^2.
\]
Then it follows at once that 
\begin{align*} 
\mathbb{P}_{\mu}(\ell < r) & \le \mathbb{P}_{\mu}(\ell < 1) - \int_{(\mathbb{S}^1)^4} \ind_{ \{ \ell <1 \}} \d(\mu|_{I''}) \otimes \d(\mu_c|_{I''}) \otimes \d(\mu_c|_{I''}) \otimes \d(\mu_c|_{I''}) \cr 
    & \le \frac13 - C \theta^2,
\end{align*} 
which concludes the proof of our result. 
\end{proof}

\begin{remark} The decay order $\theta^2$ is likely not optimal in the proof above; the reasons for the losses of that exponent in the proof above come from the fact that we only analyzed our measure within an arc of length $\sim \theta^{1/2},$ ignoring the interactions between different arcs. 

We would expect that a more careful analysis of the interactions of different arcs, together with a more thorough combinatorial argument, would yield a sharper version of Theorem \ref{thm:almot-boundary}. Furthermore, we believe the question of what should be the sharp version of the result above to be an extremely interesting one, which we would like to address in future work. 
\end{remark}

\section{Behaviour for $r<\frac12$}\label{sec:small-r}

In this section, we aim to prove Theorem \ref{rsmaller12}. For that, we start with a definition:
\begin{definition}
    Let $x \in \SS^1$ and $0<r<1$, we define
    \begin{align*}
        I_r(x) = \{ y\in \SS^1: \aff(x,y) \cap \overline{C_r} \neq \emptyset \}.
    \end{align*}
\end{definition}

    \begin{figure}[!h]

\begin{tikzpicture}[scale=2]
\tkzDefPoints{0/0/O, 1/0/X1}
\tkzDefPointsBy[rotation=center O angle 360/2](X1){X2}
\tkzDefPoints{0/0/O, 1/0/X1}
\tkzDefPoint(0,0.5){X2}
\tkzDefPoint(-0.5,0){X5}
\tkzDefPoint(-0.5,-0.87){X3}
\tkzDefPoint(-0.5,0.87){X4}
\tkzDefPoint(0,0){A}
\tkzDefPoint(-1,0){Y}

\tkzDrawCircle(O,X1)
\tkzDrawCircle(O,X2)
\tkzDrawLines(X1,X3)
\tkzDrawLines(X1,X4)
\tkzDrawArc[delta=0,line width=5,opacity=0.4](A,X4)(X3)

\tkzDrawPoints(X1)
\tkzDrawSegments(A,X5)
\tkzLabelSegments[pos=0.5,above](A,X5){$r$}
\tkzLabelPoint[right](X1){$x$}
\tkzLabelPoint[left](Y){$I_r(x)$}

\end{tikzpicture}
\caption{Arc $I_r(x)$ highlighted.}
\label{fig:Irx}

    \end{figure}
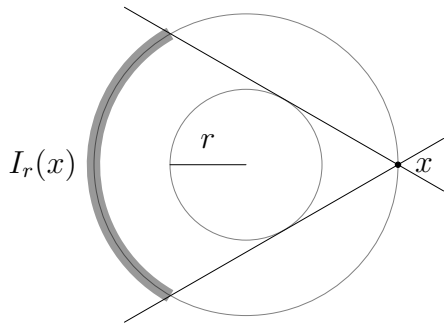

A direct consequence of the definition is that 
\begin{align}
\label{ineqintegral}
    \PP_\mu( \ell \le r) \le \left( \int_{\SS^1} \mu(I_r(x))\d\mu(x) \right)^2,
\end{align}
as two chords can only intersect in $\overline{C_r}$ if they both intersect $\overline{C_r}.$ Our goal is to explore the behaviour of the right-hand side of the inequality. In order to do that, we need a notion of what it means to locally maximize that quantity. 

\begin{definition}
\label{def1}
    We say $\mu$ is a local maxima of the quantity
    \begin{align*}
        \int_{\SS^1} \mu(I_r(x))\d\mu(x),
    \end{align*}
if there exists $\epsilon_0>0$ such that for every $0< \epsilon < \epsilon_0$ and $\nu \in \mathcal{M}(\SS^1)$ it holds that
    \begin{align}
    \label{ineq6}
        \int_{\SS^1} [(1-\epsilon)\mu(I_r(x))+\epsilon \nu(I_r(x))]\d ((1-\epsilon)\mu+\epsilon \nu) \le  \int_{\SS^1} \mu(I_r(x))\d\mu(x).
    \end{align}
\end{definition}
We prove a few lemmas concerning the structure and properties of local and glocal maximisers. 

\begin{lemma}\label{lemma:chord-distance-max}
    There exists $\mu_\star \in \mathcal{M}(\SS^1)$ extremiser of
    \begin{align}\label{eq:chord-distance-functional}
        \sup_{\mu \in \mathcal{M}\SS^1} \int_{\SS^1} \mu(I_r(x))\d\mu(x).
    \end{align}
\end{lemma}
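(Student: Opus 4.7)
The plan is a standard weak-$*$ compactness argument combined with upper semicontinuity of the functional. By Banach--Alaoglu, the space $\mathcal{M}(\SS^1)$ is weak-$*$ compact and metrizable, so any maximising sequence $(\mu_n)_n$ for \eqref{eq:chord-distance-functional} admits a subsequence with $\mu_{n_k} \rightharpoonup \mu_\star$ for some $\mu_\star \in \mathcal{M}(\SS^1)$. Rewriting the functional as
\[
F(\mu) \defined \int_{\SS^1} \mu(I_r(x)) \, \d \mu(x) = \int_{(\SS^1)^2} \ind_{S_r}(x,y) \, \d\mu(x) \, \d\mu(y), \quad S_r \defined \set*{(x,y) \in (\SS^1)^2 \st y \in I_r(x)},
\]
reduces the task to establishing upper semicontinuity of $F$ at $\mu_\star$.

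The key geometric observation is that, since $r < 1$, the set $S_r$ is a \emph{closed} subset of $(\SS^1)^2$ which is moreover bounded away from the diagonal $\Delta = \set*{(x,x) \st x \in \SS^1}$. Indeed, the map $(x,y) \mapsto d(\aff(x,y), 0)$ is continuous on $(\SS^1)^2 \setminus \Delta$, and a chord joining two nearby points of $\SS^1$ is close to a tangent line, hence at distance close to $1$ from the origin; consequently the sublevel set $\set*{(x,y) \st d(\aff(x,y),0) \le r}$ is separated from $\Delta$ by a positive gap depending only on $r$. It follows that $\ind_{S_r}$ is an upper semicontinuous bounded function on the compact space $(\SS^1)^2$.

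Since the weak-$*$ convergence $\mu_{n_k} \rightharpoonup \mu_\star$ in $\mathcal{M}(\SS^1)$ entails $\mu_{n_k} \otimes \mu_{n_k} \rightharpoonup \mu_\star \otimes \mu_\star$ in $\mathcal{M}((\SS^1)^2)$ (which, on a compact space, follows by applying Stone--Weierstrass to tensor-product continuous functions), the Portmanteau theorem applied to the upper semicontinuous bounded function $\ind_{S_r}$ yields
\[
\limsup_{k \to \infty} F(\mu_{n_k}) \le F(\mu_\star),
\]
and hence $F(\mu_\star) = \sup_\mu F(\mu)$, identifying $\mu_\star$ as the claimed extremiser. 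The main obstacle is the discontinuity of $\ind_{S_r}$, which prevents a straightforward application of the definition of weak convergence; the saving feature is the uniform separation of $S_r$ from the diagonal for $r<1$, which upgrades the indicator to an upper semicontinuous function on $(\SS^1)^2$ and makes Portmanteau applicable.
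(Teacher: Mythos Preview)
Your proof is correct and follows essentially the same approach as the paper: extract a weakly convergent subsequence by compactness of $\mathcal{M}(\SS^1)$, note that the indicator $\ind_{\{y\in I_r(x)\}}$ is upper semicontinuous on $(\SS^1)^2$, and apply Portmanteau to pass to the limit. You supply slightly more detail than the paper (the separation of $S_r$ from the diagonal, and the explicit remark that $\mu_{n_k}\otimes\mu_{n_k}\rightharpoonup\mu_\star\otimes\mu_\star$), but the argument is the same.
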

\begin{proof}
    For $0\le r \le 1,$ let $\{\mu_n\}_{n=1}^\infty \subset \mathcal{M}(\SS^1)$ be such that
      \begin{align}
    \lim_{n\rightarrow\infty} \int_{\SS^1} \mu_n(I_r(x))\d\mu_n(x) =   \sup_{\mu \in \mathcal{M}(\SS^1)} \int_{\SS^1} \mu(I_r(x))\d\mu(x).
    \end{align}
    Since $\SS^1$ is compact, by Prokhorov's theorem, we can suppose, without loss of generality, that $\mu_n$ converges weakly to a measure $\mu_\star.$ We observe that the function $\ind_{\{y \in I_r(x)\}}$ is upper semi-continuous. So, from Portmanteau's theorem, we conclude that
    \begin{align*}
        \limsup_{n\rightarrow\infty} \int_{\SS^1}\int_{\SS^1 }\ind_{\{y \in I_r(x)\}}\d\mu_n(y)\d\mu_n(x) \le \int_{\SS^1}\int_{\SS^1 }\ind_{\{y \in I_r(x)\}}\d\mu_\star(y)\d\mu_\star(x),
    \end{align*}
    concluding the proof of the result.
\end{proof}

We observe that the existence of extermisers implies the existence of local extremisers. The next result then tells us that local maximisers must satisfy additional geometric constraints, given by other probability measures.  

\begin{lemma}
\label{lemmaineq}
    Let $\mu$ be a local maxima as in Definition \ref{def1}, then it holds that
    \begin{align}
        2\int_{\SS^1} \mu(I_r(x)) \d\mu(x) \ge \int_{\SS^1} \mu(I_r(x)) \d\nu(x)+\int_{\SS^1} \nu(I_r(x)) \d\mu(x),
    \end{align}
for any $\nu \in \mathcal{M}(\SS^1).$
\end{lemma}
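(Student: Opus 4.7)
The plan is to view Definition \ref{def1} as a first-order variational condition: at a local maximum, the directional derivative along any admissible perturbation must be non-positive. Concretely, I would fix an arbitrary $\nu \in \mathcal{M}(\SS^1)$, apply the defining inequality (\ref{ineq6}), expand both sides as polynomials in $\epsilon$, and extract the coefficient at first order.

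To organize the calculation, set
\[
a = \int_{\SS^1} \mu(I_r(x)) \d\mu(x), \qquad b = \int_{\SS^1} \nu(I_r(x)) \d\mu(x),
\]
\[
c = \int_{\SS^1} \mu(I_r(x)) \d\nu(x), \qquad d = \int_{\SS^1} \nu(I_r(x)) \d\nu(x).
\]
A direct bilinear expansion of the left-hand side of (\ref{ineq6}), splitting both the integrand and the measure into their $\mu$- and $\nu$-components, yields
\[
(1-\epsilon)^2 a + \epsilon(1-\epsilon)(b+c) + \epsilon^2 d.
\]
Subtracting $a$ from both sides of (\ref{ineq6}) and dividing by $\epsilon > 0$, one obtains an inequality of the form
\[
-2a + (b+c) + \epsilon\bigl(a - (b+c) + d\bigr) \le 0.
\]
Letting $\epsilon \to 0^+$ kills the quadratic remainder and leaves exactly the asserted inequality $2a \ge b + c$.

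There is essentially no obstacle in this argument; it is the standard one-sided first-order necessary condition for a local maximum. The only mildly subtle point is that (\ref{ineq6}) is only assumed for $\epsilon > 0$, so one can only send $\epsilon \to 0^+$, which is precisely why the conclusion is an inequality rather than an equality — matching exactly what the lemma claims.
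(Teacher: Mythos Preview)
Your proposal is correct and follows essentially the same approach as the paper: expand the bilinear form in $\epsilon$, subtract the right-hand side, divide by $\epsilon$, and let $\epsilon\to 0^+$. Your introduction of the shorthand $a,b,c,d$ makes the computation slightly cleaner than the paper's version, but the argument is identical in substance.
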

\begin{proof}
Define $\nu_\epsilon = (1-\epsilon)\mu + \epsilon\nu$. We use the linearity of integral and expand inequality (\ref{ineq6})
\begin{align*}
    \int_{\SS^1}\nu_\epsilon(I_r(x))\d\nu_\epsilon(x)& = (1-\epsilon)^2\int_{\SS^1}\mu(I_r(x))\d\mu(x)  + \epsilon(1-\epsilon)\int_{\SS^1}\nu(I_r(x))\d\nu(x) \\
    & \text{    } +\epsilon\int_{\SS^1} \nu(I_r(x))\du\nu_{\epsilon}(x)\\
    & = \int_{\SS^1}\mu(I_r(x))\d\mu(x)  -2\epsilon\int_{\SS^1}\mu(I_r(x))\d\mu(x) \\
    &\text{    } +\epsilon\int_{\SS^1}\mu(I_r(x))\d\nu(x) +\epsilon(1-\epsilon)\int_{\SS^1}\nu(I_r(x))\d\mu(x) \\
    &\text{    } +\epsilon^2\int_{\SS^1}\nu(I_r(x))\d\nu(x).
\end{align*}
Dividing the inequality by $\epsilon$ and taking the limit as $\epsilon$ tends to $0$, we obtain
\begin{align*}
    0 \ge -2 \int_{\SS^1}\mu(I_r(x))\d\mu(x) + \int_{\SS^1}\nu(I_r(x))\d\mu(x) +\int_{\SS^1}\mu(I_r(x))\d\nu(x),
\end{align*}
    as we wanted.
\end{proof}

We are now ready to prove Theorem \ref{rsmaller12}. 

\begin{proof}[Proof of Theorem \ref{rsmaller12}] 
    From inequality (\ref{ineqintegral}), we observe that it suffices to prove that 
    \begin{align*}
        \int_{\SS^1}\mu(I_r(x))\d\mu(x) \le \frac12,
    \end{align*}
    for any measure $\mu \in\mathcal{M}(\SS^1).$ It is enough to prove this inequality for $\mu$ a local maxima as defined in Definition \ref{def1}, since we have proved in Lemma \ref{lemma:chord-distance-max} that global extremisers do exist. So we assume that $\mu$ is a local maximiser, and we assume by contradiction that it satisfies 
        \begin{align*}
        \int_{\SS^1}\mu(I_r(x))\d\mu(x) > \frac12.
    \end{align*}
    This implies the existence of $x_\star$ such that $\mu(I_r(x_\star)) > \frac12.$ Taking $\nu = \delta_{\{y\}}$ in Lemma \ref{lemmaineq}, we observe that
    \begin{align*}
        \int_{\SS^1} \mu(I_r(x))\d\mu(x) \ge \mu(I_r(y)),
    \end{align*}
    for $y$ $\mu$-almost everywhere, so $\mu(I_r(y))$ is constant $\mu$-almost everywhere. Since $I_r(x_\star)$ has positive $\mu-$measure, there exists a $y_\star \in I_r(x_\star)$ such that $\mu(I_r(y_\star))> \frac12.$ 
Observe that if $I_r(x_\star)$ and $I_r(y_\star)$ do not intersect we have a contradiction to the fact that $\mu$ is a probability measure. Note that if $x = e^{it},$ then  $I_r(x) = (e^{i(t + \pi -2\arcsin(r))}, e^{i(t+ \pi+2\arcsin(r))}),$ so $I_r(x_\star)$ and $I_r(y_\star)$ do not intersect whenever
\begin{align}
    \pi - 2\arcsin(r) > 4\arcsin(r),
\end{align}
    which is valid as long as $0\le r < \frac12,$ concluding the proof.
\end{proof}

Observe that the inequality (\ref{ineqintegral}) only provides a non-trivial bound up to $r=\frac12.$ Indeed, for any $r>\frac12$, we observe that a discrete measure with 3 atoms forming an equilateral triangle with equal weights to its vertices satisfies
\begin{align}\label{eq:equivalt}
    \left( \int_{\SS^1}\mu(I_r(x)) \d\mu(x) \right)^2 = \frac49,
\end{align}
which is worse than the trivial bound $\frac13.$ On the other hand, if one changes perspective and looks at the problem of determining the best measure in Lemma \ref{lemma:chord-distance-max}, that is, the extremal measure for (\ref{eq:chord-distance-functional}), the next result allows us to compute more values explicitly, showing that the bound in (\ref{eq:equivalt}) is indeed \emph{sharp} if $r$ is sufficiently close to $\frac12.$

\begin{proposition} For $r \in \left(\frac12, \sin\left(\frac{3\pi}{14}\right)\right),$ we have that 
\[
\sup_{\mu \in \mathcal{M}(\SS^1)} \int_{\SS^1} \mu(I_r(x))\d\mu(x) = \frac23.
\]
\end{proposition}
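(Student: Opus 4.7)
The plan is to establish the claimed equality by verifying both inequalities separately. For the lower bound, the equilateral-triangle measure $\mu = \tfrac{1}{3}\sum_{k=0}^{2} \delta_{e^{2\pi i k/3}}$ suffices: each chord between distinct vertices has distance $\cos(\pi/3) = \tfrac{1}{2}$ from the origin, which is strictly less than $r$; hence $I_r(p_i)$ contains the other two vertices, and a short computation gives $\int_{\mathbb{S}^1} \mu(I_r(x))\,\d\mu(x) = 3 \cdot \tfrac{1}{3} \cdot \tfrac{2}{3} = \tfrac{2}{3}$. The matching upper bound is the substance, and the plan is to adapt the template of the proof of Theorem \ref{rsmaller12}.

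Set $L := 4\arcsin(r)$, so that $L \in (\tfrac{2\pi}{3}, \tfrac{6\pi}{7}) \subset (\tfrac{2\pi}{3}, \pi)$ throughout the stated range. Take a maximiser $\mu_\star$ (existing by Lemma \ref{lemma:chord-distance-max}), write $M = \int_{\mathbb{S}^1} \mu_\star(I_r(x))\,\d\mu_\star(x)$, and argue by contradiction assuming $M > \tfrac{2}{3}$. Lemma \ref{lemmaineq} with $\nu = \delta_y$ yields $\mu_\star(I_r(y)) \le M$ for every $y \in \mathbb{S}^1$, with equality on a set $S'$ of full $\mu_\star$-measure. The argument then splits on whether $S'$ is contained in some closed arc $A$ of length $L$.

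In the \emph{containment} case, $\mu_\star$ is supported on $A$. Parametrising $A = [0, L]$ and setting $c := \pi - L/2 = 2\arccos(r)$ --- the minimum arc-distance between two points of $\mathbb{S}^1$ whose chord meets $\overline{C_r}$ --- one verifies that $L/2 < c < L$ throughout the range, so the interval $(L - c, c)$ is non-empty. Picking any $t$ therein and splitting $A$ into $[0, t] \cup (t, L]$, two points on the same side of $t$ have arc-distance below $c$ and thus do not contribute to $M$. The bound $M \le 2\mu_\star([0, t])\mu_\star((t, L]) \le \tfrac{1}{2}$ then contradicts $M > \tfrac{2}{3}$.

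For the remaining \emph{non-containment} case, the key ingredient is Helly's theorem for closed arcs of $\mathbb{S}^1$ of length strictly less than $\pi$, whose Helly number equals $3$. Dualising via $p \mapsto A_p := \{\theta \in \mathbb{S}^1 : p \in [\theta, \theta + L]\}$, each $A_p$ is itself a closed arc of length $L < \pi$; the hypothesis that no length-$L$ arc contains $S'$ amounts to $\bigcap_{p \in S'} A_p = \emptyset$, and Helly combined with compactness then produces $p_1, p_2, p_3 \in S'$ such that $\{p_1, p_2, p_3\}$ lies in no length-$L$ arc. Equivalently, since $z \in \bigcap_{i=1}^{3} I_r(p_i)$ iff $\{p_1, p_2, p_3\} \subseteq I_r(z)$, the triple intersection $\bigcap_{i} I_r(p_i)$ is empty. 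On the other hand, $\mu_\star(J_r(p_i)) = 1 - M$ for each $i$ with $J_r(p) := \mathbb{S}^1 \setminus I_r(p)$, so inclusion-exclusion yields $\mu_\star\bigl(\bigcap_i I_r(p_i)\bigr) \ge 1 - 3(1 - M) = 3M - 2 > 0$, the desired contradiction. The hardest step is this second case: identifying the correct dual-Helly formulation and exploiting the sharp threshold $L < \pi$, which is precisely what the hypothesis $r < \sin(\tfrac{3\pi}{14})$ provides.
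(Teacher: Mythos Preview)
Your proof is correct and takes a genuinely different route from the paper's.

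The paper argues as follows: starting from any point $x_*$ with $\mu_\star(I_r(x_*))>\tfrac12$, it iteratively produces $y_*\in I_r(x_*)$ and $z_*\in I_r(x_*)\cap I_r(y_*)$ with the same property, introduces auxiliary ``end-arcs'' $A_r(p)$, and via a case analysis together with a small graph-theoretic argument on the six constituent arcs shows that $\operatorname{supp}\mu_\star$ is contained in three short arcs $J_1,J_2,J_3$, whence $M\le 2(a_1a_2+a_2a_3+a_3a_1)\le\tfrac23$. Your argument bypasses all of this geometry: either $S'$ lies in one arc of length $L$, in which case a single cut gives $M\le\tfrac12$, or the dual Helly statement produces a triple $p_1,p_2,p_3$ with $\bigcap_i I_r(p_i)=\emptyset$, and the union bound on the complements gives $0\ge 3M-2$.

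Two observations worth recording. First, the only constraint you actually use is $L=4\arcsin r<\pi$, i.e.\ $r<\sin(\pi/4)=\sqrt{2}/2$; your closing remark that $r<\sin(3\pi/14)$ is ``precisely'' the threshold for $L<\pi$ is inaccurate --- your argument in fact establishes the proposition on the strictly larger interval $(\tfrac12,\tfrac{\sqrt2}{2})$, whereas the paper's construction genuinely needs $14\arcsin r<3\pi$ to guarantee the arcs $J_i$ are disjoint and short. Second, the circular Helly statement you invoke (Helly number $3$ for closed arcs of common length $L<\pi$) is correct but not entirely standard; it is worth a line of justification. One quick proof: if points $q_1,\dots,q_n$ in cyclic order have every triple contained in a closed arc of length $L$, then by induction every $n-1$ of them are; writing $g_i$ for the gaps and assuming $\max_i g_i<2\pi-L$, the $(n-1)$-point conclusion forces $g_{i-1}+g_i\ge 2\pi-L$ for each $i$, and summing gives $4\pi\ge n(2\pi-L)$, impossible for $n\ge4$ when $L<\pi$.
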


\begin{proof} We begin in a similar way to the proof of Proposition \ref{rsmaller12}. Indeed, suppose $\mu$ is a measure with 
\[
 \int_{\SS^1} \mu(I_r(x))\d\mu(x) > 1/2.
\]
By the same argument as in the proof of Proposition \ref{rsmaller12}, if $\mu$ is a local maximiser of \eqref{eq:chord-distance-functional}, then $\mu(I_r(y))$ is constant $\mu-$almost everywhere. If $x_*$ is a point with $\mu(I_r(x_*)) > \frac12,$ then there is $y_* \in I_r(x_*)$ with $\mu(I_r(y_*)) > \frac12$ as well. 

Again, if $\mu(I_r(x_*) \cap I_r(y_*)) = 0,$ then we arrive at a contradiction; so we must have that there is $z_* \in I_r(x_*) \cap I_r(y_*)$ with $\mu(I_r(z_*)) > \frac12.$  In order not to arrive at a contradiction, we must have also $$\min\{\mu(I_r(x_*) \cap I_r(z_*)), \mu(I_r(y_*) \cap I_r(z_*))\} >0.$$

For $p \in \mathbb{S}^1,$ let then $A_r(p)$ be the union of two arcs, each of which has length $6 \arcsin(r) - \pi$, one starting at one endpoint of $I_r(p),$ and the other ending at the other endpoint $I_r(p).$  We claim that 
\[
\mu\left(I_r(x_*) \setminus A_r(x_*)\right) = 0,
\]
and the same holds for $y_*$ and $z_*.$ 

\begin{center}
\begin{figure}[h!]
\begin{tikzpicture}[scale=2]
\tkzDefPoints{0/0/O, 1/0/X1}
\tkzDefPointsBy[rotation=center O angle 360/2](X1){X2}
\tkzDefPoints{0/0/O, 1/0/X1}
\tkzDefPoint(0,0.5){X2}
\tkzDefPoint(-0.5,0){X5}
\tkzDefPoint(-0.5,-0.87){X3}
\tkzDefPoint(-0.5,0.87){X4}
\tkzDefPoint(-0.7,0.714){X6}
\tkzDefPoint(-0.7,-0.714){X7}
\tkzDefPoint(0,0){A}
\tkzDefPoint(-1.1,0){Y}
\tkzDefPoint(-1.2,0.3){Y3}
\tkzDefPoint(-1.2,-0.3){Y2}

\tkzDrawCircle(O,X1)
\tkzDrawCircle(O,X2)
\tkzDrawLines(X1,X3)
\tkzDrawLines(X1,X4)
\tkzDrawLines[dashed](X7,Y2)
\tkzDrawLines[dashed](X6,Y3)
\tkzDrawArc[delta=0,line width=5,opacity=0.8](A,X4)(X6)
\tkzDrawArc[delta=0,line width=5,opacity=0.8](A,X7)(X3)

\tkzDrawPoints(X1)
\tkzLabelPoint[right](X1){$p$}
\tkzLabelPoint[left](Y){$A_r(p)$}

\end{tikzpicture}
\caption{The location of the set $A_r(p)$}
\end{figure}
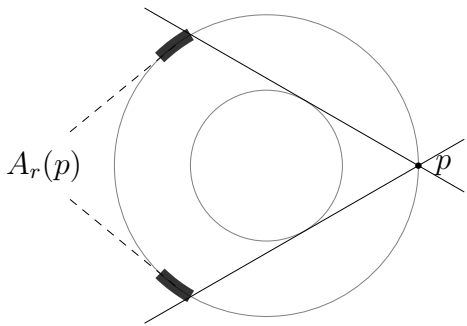
\end{center}

Indeed, if that is not the case, then there is $x' \in I_r(x_*) \setminus A_r(x_*)$ with $\mu(I_r(x')) > \frac12.$ By the definition of $A_r(p),$ it is easy to see that $I_r(x') \cap I_r(x_*) = \emptyset,$ whenever there is a point available to perform this procedure; that is, whenever 
\begin{align*} 
12 & \arcsin(r) - 2 \pi < 4 \arcsin(r) \cr 
\iff & \arcsin(r) < \frac{\pi}{4} \cr 
\iff & r < \frac{\sqrt{2}}{2},
\end{align*} 
and hence we reach a contradiction in  that case. Since we showed that 

$$\min\{\mu(I_r(x_*) \cap I_r(z_*)), \mu(I_r(y_*) \cap I_r(z_*)), \mu(I_r(x_*) \cap I_r(y_*))\} >0,$$
the considerations above imply then that 
$$\min\{\mu(A_r(x_*) \cap A_r(y_*)),\mu( A_r(y_*) \cap A_r(z_*)),\mu( A_r(z_*) \cap A_r(x_*))\} > 0.$$
We then claim that, since $A_r(x_*), A_r(y_*),A_r(z_*)$ are each a union of two arcs, the union 
$$S := A_r(x_*) \cup A_r(y_*) \cup A_r(z_*)$$
consists of at most three disjoint arcs. Indeed, let $A_1,A_2$ be the two disjoint arcs defining $A_r(x_*),$ and defined $B_1,B_2$ analogously for $A_r(y_*)$ and $C_1,C_2$ for $A_r(z_*).$ We define a graph on six vertices, where each vertex is one of the arcs $A_1,A_2,B_1,B_2,C_1,C_2,$ and two vertices are connected if and only if the respective arcs intersect. Then it follows that the graph is tripartite, and there is at least one edge between the three sets of two vertices. 

To each connected component corresponds an arc; moreover, note that the graph is triangle-free, since if, say, $A_1 \cap C_1, A_1 \cap B_1$ and $B_1 \cap C_1$ are all nonempty, 
we would have $\text{dist}(z_*,I_r(z_*)) < \frac{\pi}{2},$ which cannot happen for $r < \frac{\sqrt{2}}{2}.$ Hence, it is not difficult to conclude that the graph must have at most three distinct connected components.

Additionally, we claim that if $r < \sin\left(\frac{3\pi}{14}\right),$ then we will have exactly three of such arcs, each of which having length at most $12 \arcsin(r) - 2\pi.$ 

The proof of this claim follows from the way we constructed $I_r(x_*), I_r(y_*), I_r(z_*)$: indeed, suppose that one of the endpoints of $I_r(x_*)$ is $1.$ Then there are two arcs $A_1 \subset A_r(x_*), B_1 \subset A_r(y_*),$ such that $A_1 \cap B_1 \neq \emptyset$ and 
$$A_1 \cup B_1 \subset (e^{-i(6 \arcsin(r) - \pi)}, e^{i(6\arcsin(r)-\pi)}).$$
If we let $z_* = e^{is_0},$ then $I_r(z_*) = (e^{i(s_0 + \pi -2\arcsin(r))},e^{i(s_0 + \pi + 2\arcsin(r))}).$ Since we take $z_* \in I_r(x_*) \cap I_r(y_*),$ we have $z_* \in A_1 \cap B_1$ as long as $r \le \frac{\sqrt{2}}{2}.$ This implies that $s_0 \in (-(6\arcsin(r) - \pi),6\arcsin(r) - \pi).$ Hence, $I_r(z_*)$ is disjoint from $A_1 \cup B_1$ as long as 
\[
4 \arcsin(r) > 6 \arcsin(r) - \pi
\]
and 
\[
8 \arcsin(r) < 3 \pi - 6 \arcsin(r).
\]
The first condition is trivially true, while the second is true if, and only if, $r < \sin\left( \frac{3 \pi}{14}\right).$ Hence for such $r$ the set $I_r(z_*)$ is disjoint from the intersection $I_r(x_*) \cap I_r(y_*).$ On the other hand, with this parametrisation we have 
\[
I_r(x_*) \cup I_r(y_*) \subset \left( \mathbb{S}^1 \setminus \left(e^{4i \arcsin(r)}, e^{2i(\pi - 2\arcsin(r))}\right)\right).
\]
Since this implies that the arc-length of the complement of $I_r(x_*) \cup I_r(y_*)$ is at least 
\[
2 \pi - 8 \arcsin(r) > 6 \arcsin(r) - \pi 
\]
whenever $r < \sin\left( \frac{3\pi}{14}\right),$ then the intervals $C_1,C_2$ constituting $I_r(z_*)$ cannot intersect both $I_r(x_*)$ and $I_r(y_*).$ 
Rewrite thus
$$ S = J_1 \cup J_2 \cup J_3,$$
where $J_i$ are the arcs mentioned before, and let 
$$a_1 = \mu(J_1), a_2 = \mu(J_2), a_3 = \mu(J_3).$$
We clearly have $a_1 + a_2 + a_3 = 1$ by the considerations above. Furthermore, since the length of each arc is at most $12 \arcsin(r) - 2 \pi < 2 \arccos(r)$ for $r < \sin\left(\frac{3\pi}{14}\right),$ if we have a chord drawn from endpoints within the same $J_i,$ then the chord will not intersect the circle $C_r.$ We have hence
\[
\int_{\SS^1} \mu(I_r(x))\d\mu(x) \le 2(a_1 a_2 +a_2a_3 + a_1a_3) = 1 - (a_1^2 + a_2^2 + a_3^2) \le \frac{2}{3}. 
\]
Clearly, equality holds by simply taking three deltas, with weight $1/3$ each, at the vertices of an equilateral triangle lying on $\SS^1.$ This finishes the proof.  
\end{proof}

\begin{remark}[Expected value and energy minimisation] In the proof of Proposition \ref{rsmaller12}, we showed that one can get sharp bounds for the problem of intersecting chords by getting bounds for the quantity
\begin{align}
\label{energy}
    \int_{\SS^1} \int_{\SS^1} \ind_{\{y \in I_r(x)\}}\d\mu(y)\d\mu(x). 
\end{align}
   This problem is related to a well-known energy minimisation problem. We consider $\SS^d \subset \RR^{d+1}$ the sphere and a function $F:[-1,1] \rightarrow \mathbb{R}$. Let $\mathcal{B}$ be the set of Borel probability measures in $\SS^d$ and let $\mu \in \mathcal{B}$. We are interested in the quantity
\begin{align*}
    I_F(\mu) = \int_{\SS^d}\int_{\SS^d} F(x\cdot y) \d\mu(x)\d\mu(y).
\end{align*}
Indeed, observe that we can rephrase the quantity (\ref{energy}) as  the following: \begin{align}
    \int_{\SS^1} \int_{\SS^1} \ind_{\{y \in I_r(x)\}}\d\mu(y)\d\mu(x) = \int_{\SS^1} \int_{\SS^1} \ind_{\{ x\cdot y \le r^2-1 \}}\d\mu(y)\d\mu(x).
\end{align} 
A particularly interesting question is when is it possible to characterise optimal measures, that is, measures that minimise or maximise $I_F(\mu)$ for a fixed function $F$, or to compute extremal values of the quantity $I_F(\mu)$. Indeed, such an instance is when one is interested in computing the maximal \emph{expected value} of the distance of a chord to the origin, which is picked according to an antipodally symmetric distribution on the circle. We postpone the discussion of that to the next section.
\end{remark}

\section{Comments and remarks}\label{sec:general}

\subsection{Local optimisers and Euler-Lagrange equations} For this part, we will denote $F_r= f^{-1}([0,r))$ and, if $\nu$ any probability measure in $\mathbb{S}^1$, $G_r(\nu) = \nu \otimes \nu \otimes \nu \otimes \nu(F_r).$

A natural question arising from the definition of the functions $A(r)$ and $\overline{A}(r)$ is whether extremal probability measures exists. It turns out that we are able to provide a quick answer to this question, at least in the case of $\overline{A}:$ 

\begin{proposition}
For every $0\le r \le 1$, there is a probability measure $\nu_r$ such that $ \PP_{\nu_r}(\ell \le r) = \overline{A}(r).$
\end{proposition}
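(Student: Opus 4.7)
The plan is a standard compactness-plus-upper-semicontinuity argument: the space $\mathcal{M}(\mathbb{S}^1)$ of Borel probability measures on $\mathbb{S}^1$ is compact in the weak-* topology (by Banach--Alaoglu, or equivalently Prokhorov, since $\mathbb{S}^1$ is compact), so it suffices to show that the functional $\nu \mapsto G_r(\nu) = \nu^{\otimes 4}(F_r)$ is upper semicontinuous on $\mathcal{M}(\mathbb{S}^1)$. Since weak-* convergence $\mu_n \to \nu$ on $\mathbb{S}^1$ yields weak-* convergence $\mu_n^{\otimes 4} \to \nu^{\otimes 4}$ on $(\mathbb{S}^1)^4$, by Portmanteau's theorem it is enough to verify that the set $F_r \subset (\mathbb{S}^1)^4$ is closed. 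Granting this, if $(\mu_n) \subset \mathcal{M}(\mathbb{S}^1)$ is a maximising sequence for $\overline{A}(r)$, one extracts a weakly convergent subsequence with limit $\nu_r$ and obtains
\[
\overline{A}(r) = \lim_n \mu_n^{\otimes 4}(F_r) \le \nu_r^{\otimes 4}(F_r) \le \overline{A}(r),
\]
forcing equality.

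The core of the argument is thus to show $F_r$ is closed, and this is where the degeneracies in the definition of $f$ must be handled carefully. I propose the following equivalent reformulation:
\[
F_r = \bigl\{ (x_1,x_2,x_3,x_4) \in (\mathbb{S}^1)^4 \st \overline{C_r} \cap \aff(x_1,x_2) \cap \aff(x_3,x_4) \neq \emptyset \bigr\}.
\]
This identity can be checked case by case against the three clauses in the definition of $f$: when the affine hulls meet at a single point, when they coincide (using that a line at distance $\le r$ from $0$ meets $\overline{C_r}$), and when they are parallel but distinct (in which case $f = \infty$ and the triple intersection is empty).

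To conclude that $F_r$ is closed, take $(x_1^n,\dots,x_4^n) \to (x_1,\dots,x_4)$ with each tuple in $F_r$, and pick $p_n \in \overline{C_r} \cap \aff(x_1^n,x_2^n) \cap \aff(x_3^n,x_4^n)$. Compactness of $\overline{C_r}$ yields a subsequential limit $p \in \overline{C_r}$, and one verifies $p \in \aff(x_1,x_2)$ (and analogously for the other pair). The key continuity statement, which I expect to be the main technical obstacle, is that $\aff(x_1^n,x_2^n) \to \aff(x_1,x_2)$ in the Hausdorff sense on compact sets for any convergent sequence on $\mathbb{S}^1$, including the case $x_1 = x_2$: one checks that when $x_1^n \neq x_2^n$ both tend to the same limit point $x_1 \in \mathbb{S}^1$, the chord $\aff(x_1^n,x_2^n)$, being perpendicular to the radial direction at the midpoint of the endpoints, tends precisely to the tangent line at $x_1$, which is exactly the convention $\aff(x_1,x_1)$ adopted in the paper. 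The remaining mixed cases (some chords are already tangents) follow from continuity of the tangent line as a function of the tangency point. Passing to the limit in $p_n \in \aff(x_1^n,x_2^n)$ then gives $p \in \aff(x_1,x_2)$, closing the argument.
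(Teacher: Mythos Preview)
Your proof is correct and follows the same compactness-plus-Portmanteau route as the paper; the paper simply asserts that $f$ is continuous (in fact only lower semicontinuity holds, and that is all that is needed) and applies Portmanteau to the closed set $\{f\le r\}$, whereas you verify closedness of that set directly via the geometric reformulation, which is arguably more careful. One notational slip: you write $F_r$ throughout, but in the paper's conventions $F_r=f^{-1}([0,r))$ is the \emph{open} sublevel set corresponding to $A(r)$, not $\overline{A}(r)$; the set you actually work with (and correctly identify as $\{\overline{C_r}\cap\aff(x_1,x_2)\cap\aff(x_3,x_4)\neq\emptyset\}$) is $f^{-1}([0,r])$.
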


\begin{proof}
    Fix $0\le r \le 1$ and let $(\mu_k)_{k=1}^{\infty}$ be a sequence in $\mathcal{M}(\mathbb{S}^1)$ be such that
    \begin{align*}
        \lim_{k\rightarrow \infty} \PP_{\mu_k}(\ell \le r) = \overline{A}(r). 
    \end{align*}
Since $\SS^1$ is compact, by Prokhorov's theorem we can suppose, without loss of generality, that $\mu_k$ converges weakly to a measure $\nu_r.$

Using that $f$ is continuous and Portmanteau's Theorem we conclude that
\begin{align*}
  \overline{A}(r) = \lim_{k\rightarrow \infty} \mu_k \otimes \mu_k \otimes \mu_k \otimes \mu_k (\overline{F_r}) \le \nu_r \otimes \nu_r \otimes \nu_r \otimes \nu_r (\overline{F_r}) = \PP_{\nu_r}( \ell \le r),
\end{align*}
so $\nu_r$ is an extremiser, as desired. 
\end{proof}

Note that the same method cannot be used to prove that extremisers exist for $A(r).$ Indeed, for the proof above, we need to use a different version of Portmanteau's theorem, whcih would amount to proving that the limiting measure does not attribute a positive mass to the event of diagonals intersecting on a single circle - which if false if, for instance, the limiting measure is a suitably picked discrete measure. Moreover, it is easy to see that $A(r)$ and $\overline{A}(r)$ do not coincide for $r=0,1$. 

Directly related to the notion of extremality is that of local maxima. Indeed, in complete analogy to what we discussed in the case of the one-chord problem, we say $\mu$ is a \emph{local maximiser} for the problem of maximising $\mathbb{P}_{\mu}(\ell \le r)$ if there exists $\epsilon_0 > 0 $ such that, for any $0<\epsilon < \epsilon_0$ and $\nu \in \mathcal{M}(\SS^1)$, we have
    \begin{align*} 
        G_r((1-\epsilon)\mu + \epsilon \nu) \le G_r(\mu).
    \end{align*}
The concept of local maxima in those problems in which we are interested is useful for obtaining an appropriate Euler-Lagrange-type equation for extremal measures. Indeed, if $\mu$ is a local maximum, it holds that for every $\nu \in \mathcal{M}(\mathbb{S}^1)$
\begin{align}\label{eq:EL-ineq}
  \nu \otimes \mu \otimes \mu \otimes \mu ( F_r) \le   G_r(\mu).
  \end{align}
The proof of this fact follows by a direct computation: let $\nu_\epsilon = \epsilon \nu + (1-\epsilon)\mu)$ for $0\le \epsilon \le \epsilon_0$ and observe that, using the linearity of the integral, we have 
\begin{align*}
G_r(\nu_\epsilon) & = \int_{(\mathbb{S}^1)^4} \ind_{ \{ |f(x_1,x_2,x_3,x_4)|<r \}} \d\nu_\epsilon \d\nu_\epsilon\d\nu_\epsilon\d\nu_\epsilon \\
& = (1-\epsilon)^4G_r(\mu) + \epsilon^4G_r(\nu) +  4 \epsilon^3(1-\epsilon)\int_{(\mathbb{S}^1)^4} \ind_{ \{ |f(x_1,x_2,x_3,x_4)|<r \}} \d\nu\d\nu\d\nu\d\mu \\
& +  4 \epsilon(1-\epsilon)^3\int_{(\mathbb{S}^1)^4} \ind_{ \{ |f(x_1,x_2,x_3,x_4)|<r \}} \d\mu\d\mu\d\mu\d\nu \\
& + 2\epsilon^2(1-\epsilon)^2\int_{(\mathbb{S}^1)^4} \ind_{ \{ |f(x_1,x_2,x_3,x_4)|<r \}} \d\mu\d\mu\d\nu\d\nu \\
& + 4\epsilon^2(1-\epsilon)^2\int_{(\mathbb{S}^1)^4} \ind_{ \{ |f(x_1,x_2,x_3,x_4)|<r \}} \d\mu\d\nu\d\mu\d\nu \le G_r(\mu),
\end{align*}
where the last inequality holds because $\mu$ is local maxima.
Gathering the terms multiplying $G_r(\mu)$ and dividing the equation by $\epsilon$ we get
\begin{align*}
 \epsilon^3G_r(\nu) & +  4 \epsilon^2(1-\epsilon)\int_{(\mathbb{S}^1)^4} \ind_{ \{ |f(x_1,x_2,x_3,x_4)|<r \}} \d\nu\d\nu\d\nu\d\mu \\
& +  4 (1-\epsilon)^3\int_{(\mathbb{S}^1)^4} \ind_{ \{ |f(x_1,x_2,x_3,x_4)|<r \}} \d\mu\d\mu\d\mu\d\nu \\
& + 2\epsilon(1-\epsilon)^2\int_{(\mathbb{S}^1)^4} \ind_{ \{ |f(x_1,x_2,x_3,x_4)|<r \}} \d\mu\d\mu\d\nu\d\nu \\
& + 4\epsilon(1-\epsilon)^2\int_{(\mathbb{S}^1)^4} \ind_{ \{ |f(x_1,x_2,x_3,x_4)|<r \}} \d\mu\d\nu\d\mu\d\nu \le (4-6\epsilon+4\epsilon^2-\epsilon^3) G_r(\mu)
\end{align*}
Taking the limit when $\epsilon$ goes to $0$ we get \eqref{eq:EL-ineq}. A direct use of that inequality then yield the following:

\begin{corollary}
    Suppose $\mu$ is a local maximum. Then, $\mu-$almost every $y \in \mathbb{S}^1$ it holds that
    \begin{align}\label{eq:EL-mu}
        \mu\otimes \mu \otimes\mu( (y_2,y_3,y_4) \in (\mathbb{S}^1)^3 : f(y,y_2,y_3,y_4) \le r) = G_r(\mu).
    \end{align}
\end{corollary}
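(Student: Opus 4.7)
The plan is to exploit the Euler--Lagrange-type inequality \eqref{eq:EL-ineq} by testing it against Dirac measures, and then to extract pointwise equality from the combination of the resulting bound and an integral identity obtained via Fubini.

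First, I would introduce the auxiliary function $h : \mathbb{S}^1 \to [0,1]$ defined by
\[
h(y) := \mu \otimes \mu \otimes \mu\bigl( \{(y_2,y_3,y_4) \in (\mathbb{S}^1)^3 : f(y,y_2,y_3,y_4) < r \} \bigr).
\]
Since $F_r = f^{-1}([0,r))$ is Borel and the indicator $\mathbf{1}_{F_r}$ is jointly measurable, Fubini's theorem ensures that $h$ is Borel measurable on $\mathbb{S}^1$ and that $\int_{\mathbb{S}^1} h(y) \, d\mu(y) = G_r(\mu)$.

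Next, I would apply \eqref{eq:EL-ineq} with $\nu = \delta_y$ for each $y \in \mathbb{S}^1$. Since $\delta_y \in \mathcal{M}(\mathbb{S}^1)$ is an admissible test measure, the inequality yields
\[
h(y) \;=\; \delta_y \otimes \mu \otimes \mu \otimes \mu(F_r) \;\le\; G_r(\mu)
\]
for every $y \in \mathbb{S}^1$. Combining these two facts: the function $G_r(\mu) - h$ is pointwise nonnegative on $\mathbb{S}^1$ and has vanishing integral against $\mu$, since $\int_{\mathbb{S}^1} (G_r(\mu) - h(y)) \, d\mu(y) = G_r(\mu) - G_r(\mu) = 0$. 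A nonnegative Borel function with zero integral against a positive measure must vanish almost everywhere with respect to that measure, so $h(y) = G_r(\mu)$ for $\mu$-a.e. $y \in \mathbb{S}^1$, which is precisely the claim.

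No serious obstacle arises; the only mild technicality concerns the discrepancy between the strict inequality $f < r$ in the definition $F_r = f^{-1}([0,r))$ (and hence in $G_r(\mu)$) and the non-strict $f \le r$ appearing in the statement of the corollary. Either this is a typographical inconsistency with $F_r$, in which case the argument above is complete as written, or one recovers the $\le r$ version by applying the argument with $r$ replaced by $r + \varepsilon$ and letting $\varepsilon \to 0^+$, invoking monotone convergence of the indicator functions and the fact that the level set $\{f = r\}$ has null $\mu^{\otimes 4}$-measure for all but countably many values of $r$.
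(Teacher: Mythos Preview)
Your proof is correct and follows essentially the same approach as the paper: both test the Euler--Lagrange inequality \eqref{eq:EL-ineq} against $\nu = \delta_y$, combine the resulting pointwise bound with the Fubini identity $\int h\,\d\mu = G_r(\mu)$, and conclude equality $\mu$-almost everywhere. Your version is in fact more careful, making the measurability and nonnegative-integrand reasoning explicit and flagging the $<r$ versus $\le r$ inconsistency that the paper glosses over.
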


\begin{proof}
Let $y \in \mathbb{S}^1$ and take $\nu = \delta_{\{y\}}$ in \eqref{eq:EL-ineq}. It follows that 
\begin{equation}\label{eq:fourfold-sym}
\begin{split}
     \mu\otimes \mu & \otimes\mu( (y_2,y_3,y_4) \in (\mathbb{S}^1)^3 : f(y,y_2,y_3,y_4) \le r) \le G_r(\mu) \\
     & =\int_{\mathbb{S}^1}\mu\otimes \mu \otimes\mu( (y_2,y_3,y_4 \in (\mathbb{S}^1)^3 : f(y,y_2,y_3,y_4) \le r)\d\mu(y) ,
\end{split}
\end{equation}
so we must have equality $\mu$-almost everywhere.
\end{proof}

This last result shows that maximisers achieving $\overline{A}(r)$ have to be fairly special - indeed, they satisfy \eqref{eq:fourfold-sym}, which is a rather weak form of radial symmetry. It is expected then that the locally optimal measures, or even that the measures satisfying the Euler-Lagrange-type equation \eqref{eq:EL-mu}, have a very regular structure.  

For instance, the radial symmetry of the uniform measure on the unit circle shows that that measure satisfies \eqref{eq:EL-mu} is satisfied trivially: indeed, the radial invariance allows us to always suppose that, whenever we are drawing four points $X_1,X_2,X_3,X_4 \distas d U(\mathbb{S}^1),$ the first of them is fixed at $1$. Since $1$ plays no special role whatsoever, we could have fixed any point $y \in \mathbb{S}^1,$ and the same argument works. Hence, one arrives at the fact that the left-hand side of \eqref{eq:EL-mu} is constant for $\mu = U(\mathbb{S}^1).$

As a corollary of Theorem \ref{rsmaller12}, we have that the measures $\frac12\left( \delta_p + \delta_{-p}\right)$ all satisfy \eqref{eq:EL-mu}, for $r \in [0,1/2].$ It is curious, however, to note that those are far from being the only two-point masses satisfying this: indeed, for any chord $\ell_1$ with endpoints $p_1,p_2$ for which $\ell_1 \cap C_r \neq \emptyset,$ then $\frac12\left(\delta_{p_1} +\delta_{p_2}\right)$ is also a maximiser to the problem of finding $A(r),$ and hence also must satisfy \eqref{eq:EL-mu}. 

\subsection{Maximal expected value of midpoint of random chords} As promised at the end of Section \ref{sec:small-r}, we discuss below the related problem of finding the maximal and minimal \emph{expected values} for the simplified problem of midpoints of a random chord. 

\begin{proposition}\label{prop:bounds} Let $\mu \in \mathcal{M}(\mathbb{S}^1)$, and let $X,Y$ be independent, identically distributed random variables with $X \distas d \mu.$ Then 
\begin{align*}
\mathbb{E}_\mu(\text{\emph{dist}}(\text{\emph{aff}}(X,Y),0)) = \mathbb{E}_\mu \left( \frac{|X+Y|}{2} \right) \ge \frac{1}{2}.
\end{align*}
Equality holds if, and only if, we have $d\mu = \frac{1}{2} \left( \delta_{p} + \delta_{-p}\right),$ with $p \in \mathbb{S}^1.$ 

Furthermore, suppose now that $\mu$ is antipodally symmetric. Then we have the \emph{upper bound}
\begin{align*}
\mathbb{E}_\mu \left(\frac{|X+Y|}{2}\right) \le \frac{2}{\pi},
\end{align*}
with equality if, and only if, $ \d \mu = \frac{1}{2\pi} \cdot \d\mathcal{H}^1|_{\mathbb{S}^1}$ is the uniform measure on the circle.  
\end{proposition}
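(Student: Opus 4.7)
The plan is to begin by translating the geometric quantity into trigonometric form: writing $X=e^{i\alpha}, Y=e^{i\beta}$ on $\mathbb{S}^1$, the identity $|e^{i\alpha}+e^{i\beta}|^2 = 2+2\cos(\alpha-\beta) = 4\cos^2\!\left(\tfrac{\alpha-\beta}{2}\right)$ yields
\[
\frac{|X+Y|}{2} = \left|\cos\!\left(\tfrac{\alpha-\beta}{2}\right)\right|,
\]
which also handles the degenerate case $X=Y$ correctly (both sides equal $1$). Both bounds then reduce to controlling the expectation of this quantity in terms of properties of $\mu$.

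For the lower bound, I would exploit the trivial pointwise inequality $|\cos t|\ge \cos^2 t$ (valid because $|\cos t|\le 1$). Applying it with $t=(\alpha-\beta)/2$, rewriting $\cos^2 t = \tfrac{1+\cos(2t)}{2}$, and taking expectations gives
\[
\mathbb{E}_\mu\!\left[\tfrac{|X+Y|}{2}\right] \ge \tfrac{1}{2} + \tfrac{1}{2}\mathbb{E}[\cos(\alpha-\beta)] = \tfrac{1}{2} + \tfrac{1}{2}|\mathbb{E} X|^2 \ge \tfrac{1}{2},
\]
where the middle equality uses the independence of $\alpha$ and $\beta$. Equality forces both $\mathbb{E}[X]=0$ and $|\cos((\alpha-\beta)/2)|\in\{0,1\}$ holding $\mu\otimes\mu$-almost surely; the latter condition is equivalent to $Y\in\{X,-X\}$ a.s., which confines the support of $\mu$ to some antipodal pair $\{p,-p\}$, and then the vanishing first moment forces equal weights, giving $\mu=\tfrac{1}{2}(\delta_p+\delta_{-p})$.

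For the upper bound under antipodal symmetry, the strategy is spectral: one expands $|\cos\theta|$ in its ($\pi$-periodic) Fourier series
\[
|\cos\theta| = \frac{2}{\pi} + \frac{4}{\pi}\sum_{n=1}^{\infty}\frac{(-1)^{n+1}}{4n^2-1}\cos(2n\theta),
\]
substitutes $\theta=(\alpha-\beta)/2$, and integrates term by term against $\mu\otimes\mu$ (justified by uniform convergence). Using independence together with $\mathbb{E}[\cos(k(\alpha-\beta))]=|\hat\mu(k)|^2$, the expectation becomes
\[
\mathbb{E}_\mu\!\left[\tfrac{|X+Y|}{2}\right] = \frac{2}{\pi} + \frac{4}{\pi}\sum_{n=1}^{\infty}\frac{(-1)^{n+1}}{4n^2-1}|\hat\mu(n)|^2.
\]
The crucial observation is that the sign $(-1)^{n+1}$ is positive for odd $n$ and negative for even $n$, while antipodal symmetry of $\mu$ forces $\hat\mu(n)=0$ for every odd $n$. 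Hence only non-positive contributions survive, yielding the desired bound $\le 2/\pi$, with equality iff $\hat\mu(2m)=0$ for every $m\ge 1$; combined with the vanishing on odd integers, $\hat\mu$ is supported at the origin, which characterizes the uniform measure $\tfrac{1}{2\pi}\mathcal{H}^1|_{\mathbb{S}^1}$.

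The main subtlety is the sign alignment in the Fourier step: it is a fortunate structural fact that antipodal symmetry precisely annihilates the positive-coefficient modes, and this is what allows the argument to deliver the sharp constant $2/\pi$ rather than a weaker bound. Beyond the (standard) verification of the Fourier expansion, the positivity $|\hat\mu(n)|^2\ge 0$, and the legality of interchanging sum and integral, the argument is entirely routine, and both equality characterizations drop out directly from the respective representations.
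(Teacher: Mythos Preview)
Your argument is correct on both halves, but your treatment of the lower bound is genuinely different from (and slicker than) the paper's. The paper proves the bound $\ge 1/2$ by the same Fourier expansion you use for the upper bound: writing $I_0(\mu)=\sqrt{2}\sum_m c_m|\hat\mu(m)|^2$ with $c_m=(-1)^{m+1}\frac{2\sqrt{2}}{\pi(4m^2-1)}$, it drops the positive odd-index terms and bounds the even-index ones using $|\hat\mu(m)|\le 1$, then evaluates the resulting numerical series to get exactly $1$. Your one-line pointwise inequality $|\cos t|\ge \cos^2 t=\tfrac{1}{2}+\tfrac12\cos 2t$ bypasses all of this and, via $\mathbb{E}[\cos(\alpha-\beta)]=|\mathbb{E}X|^2\ge 0$, lands directly on $1/2$; it also makes the equality analysis more transparent, since you only need to track equality in a single pointwise estimate plus the vanishing of a barycenter, rather than chase equality across infinitely many Fourier modes.

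For the upper bound the two proofs are essentially the same spectral argument, with one cosmetic difference: the paper first uses antipodal symmetry to replace $|X+Y|$ by $|X-Y|$, whose Fourier coefficients $-\frac{2\sqrt{2}}{\pi(4n^2-1)}$ are all nonpositive for $n\ne 0$, so the bound $\le 2/\pi$ follows without any parity bookkeeping. You instead keep $|X+Y|$ and observe that antipodal symmetry annihilates exactly the odd modes (which carry the positive coefficients), leaving only nonpositive even-mode contributions. Both routes reach the same conclusion with the same equality characterization.
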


\begin{proof} We will identify $\mathbb{S}^1$ with $[-1/2,1/2],$ through the map $t \longmapsto e^{2 \pi i t}.$ With that identification, we may identify $\mu$ with its respective measure on $[-1/2,1/2].$ This, together with the fact that $\mu$ is antipodally symmetric, shows that we need to maximise 
\begin{align*}
I_0(\mu) = &\int_{-1/2}^{1/2} \int_{-1/2}^{1/2} |e^{2 \pi i t} + e^{2\pi i r}| \, \d\mu(t) \, \d\mu(r) \cr 
 &= \sqrt{2} \int_{-1/2}^{1/2} \int_{-1/2}^{1/2} \left(1+\cos(2\pi(t-r))\right)^{1/2} \, \d\mu(t) \, \d\mu(r). 
\end{align*}
Suppose then that one has 
\[
\widehat{\mu}(n) = \int_{-1/2}^{1/2} e^{-2\pi i n t} \, \d\mu(t) =: a_n.
\]
Then we have 
\begin{align*}
I_0(\mu) & = \sqrt{2} \sum_{m,n \in \mathbb{Z}} a_m \overline{a_n} \int_{-1/2}^{1/2} \int_{-1/2}^{1/2} \left(1+\cos(2\pi(t-r))\right)^{1/2} e^{2\pi i mt} e^{-2\pi i nr} \, \d t \, \d r \cr 
        & = \sqrt{2} \sum_{n,m \in \mathbb{Z}} a_m \overline{a_n} \int_{-1/2}^{1/2} \left(\int_{-1/2}^{1/2} \left(1+\cos(2\pi(t-r))\right)^{1/2} e^{- 2\pi i n r} \, \d r \right) e^{2\pi i m t} \, \d t.
\end{align*}
Note that the inner integral above is simply 
\[
e^{- 2\pi i n t} \cdot \int_{-1/2}^{1/2} (1+\cos(2\pi r))^{1/2} \cos(2\pi n r) \, \d r = (-1)^{n+1}\frac{2\sqrt{2}}{\pi \cdot(4n^2-1)} e^{- 2\pi i n t} =: c_n e^{-2\pi i n t}.
\]
Hence, we obtain that 
\begin{align*}
I_0(\mu) & = \sqrt{2} \sum_{m, n \in \mathbb{Z}} a_m \overline{a_n} c_n \int_{-1/2}^{1/2} e^{2\pi i m t} e^{-2\pi i n t} \, \d t = \sqrt{2} \sum_{m \in \mathbb{Z}} c_m |a_m|^2 \cr 
 & \ge \sqrt{2} c_0 - \sqrt{2} \sum_{m \text{ even}} |c_m| |a_m|^2 \ge  \frac{4}{\pi} - \frac{4}{\pi} \sum_{m \text{ even}} \frac{1}{4m^2-1} \cr 
 & = \frac{4}{\pi} - \frac{8}{\pi}\cdot \left( \frac{4-\pi}{8}\right) = 1.
\end{align*}
Notice that we have equality if, and only if, we have $a_n = 0$ for each $n$ odd, and $a_n = \pm 1$ for each $n$ even. Since $|a_2| \le 1$ with equality if and only if $\mu$ is a sum of two deltas supported on antipodal points, we are restricted to that case; It is then easy to check that $a_3 = 0$ if and only if $\mu = \frac{1}{2} \left( \delta_p + \delta_{-p}\right), \, p \in \mathbb{S}^1.$ This finishes the proof of the first assertion. 

In order to prove the second one, we employ a similar tactic: indeed, we have that, since $\mu$ is antipodally symmetric, we may reduce matters to estimating 
\begin{align*}
I_0(\mu) = &\int_{-1/2}^{1/2} \int_{-1/2}^{1/2} |e^{2 \pi i t} - e^{2\pi i r}| \, \d\mu(t) \, \d\mu(r) \cr 
 &= \sqrt{2} \int_{-1/2}^{1/2} \int_{-1/2}^{1/2} \left(1-\cos(2\pi(t-r))\right)^{1/2} \, \d\mu(t) \, \d\mu(r) \cr 
 & = \sqrt{2} \sum_{m,n \in \mathbb{Z}} a_m \overline{a_n} \int_{-1/2}^{1/2} \int_{-1/2}^{1/2} \left(1-\cos(2\pi(t-r))\right)^{1/2} e^{2\pi i mt} e^{-2\pi i nr} \, \d t \, \d r \cr 
        & = \sqrt{2} \sum_{n,m \in \mathbb{Z}} a_m \overline{a_n} \int_{-1/2}^{1/2} \left(\int_{-1/2}^{1/2} \left(1-\cos(2\pi(t-r))\right)^{1/2} e^{- 2\pi i n r} \, \d r \right) e^{2\pi i m t} \, \d t. 
\end{align*} 
Note once more that we have an explicit formula for the  integral above:  
\[
e^{- 2\pi i n t} \cdot \int_{-1/2}^{1/2} (1-\cos(2\pi r))^{1/2} \cos(2\pi n r) \, \d r = -\frac{2\sqrt{2}}{\pi \cdot(4n^2-1)} e^{- 2\pi i n t} =: c_n e^{-2\pi i n t}.
\]
Hence, we obtain that 
\begin{align*}
I_0(\mu)  = \sqrt{2} \sum_{m, n \in \mathbb{Z}} a_m \overline{a_n} c_n \int_{-1/2}^{1/2} e^{2\pi i m t} e^{-2\pi i n t} \, \d t = \sqrt{2} \sum_{m \in \mathbb{Z}} c_n |a_n|^2 \le \sqrt{2} c_0 |a_0|^2 \le \frac{4}{\pi}.
\end{align*}
Notice that we have equality if, and only if, we have $a_n \equiv 0$ for each $n \neq 0.$ This can only happen if $\mu$ is \emph{exactly} the uniform measure on $\mathbb{S}^1.$ This finishes the proof of the second assertion, and hence that of the Proposition. 
\end{proof}

The first proof of the second assertion in Proposition \ref{prop:bounds} seems to be the one from \cite{Bjorck}, where the author uses tools from potential analysis in order to conclude that the only distribution which maximises $\mathbb{E}|X-Y|$ is the uniform one. The Fourier analysis approach to these kinds of problems, as employed above, has on the other hand been made popular through several contributions in the field. Indeed, we refer the reader to the works \cite{bilik}, \cite{Alexander}, and the references therein for further instances of such an use of an orthogonal basis decomposition in this setting, and to the work \cite{Foschi} for an application of such techniques to the sharp Fourier restriction problem. 

  In general, in such energy minimization problems, the function $F$ is supposed to possess certain special properties allowing one to compute - or estimate - $I_F(\mu).$ In particular, as mentioned before, in many of the results in the literature, some positivity (or precise sign alternating structure, as in the lower bound of Proposition \ref{prop:bounds}) of orthonormal expansion coefficients - such as Fourier series, spherical harmonics, orthogonal polynomials and so on - of $F$ is assumed. In our case, however, since $F$ is the characteristic of an interval, any of the desired positivity conditions above can be shown to fail. 
   Indeed, the Fourier coefficients of the function $F_r(t) = \ind_{\{ \cos(2 \pi t) \le r-1\}}$ can be shown to be
\begin{align*}
    \widehat{F_r}(k) = -\frac1{\pi k} \sin(k \arccos{(r-1)}),
\end{align*}
which do not possess the desired sign structure.

In general, we believe that the results above represent a new paradigm in the landscape of energy minimization problems - especially since the maximisers are, in a concrete sense, very non-unique. In particular, we would expect that the techniques developed above might be useful for further problems of energy minimization with a geometric flavour. 

\section*{Acknowledgements} 

The authors are thankful to E. Kowalski for encouraging them to investigate the problem of analysing the intersection of chords on the circle through the lenses of extremisation problems. We would also like to thank E. Carneiro for interesting discussions held during the preparation of this manuscript. 





 \end{document}